\title[Explicit projective embeddings of standard opens]{Explicit projective embeddings of standard opens of the Hilbert scheme of points}
\author[R.\ M.\ Skjelnes \and G.\ S.\ St{\aa}hl]{Roy Mikael Skjelnes \and Gustav S{\ae}d{\'e}n St{\aa}hl}
\thanks{The second author is supported by the Swedish Research Council, grant number 2011-5599.}
\address{Department of Mathematics, KTH Royal Institute of Technology, SE-100 44 Stockholm, Sweden}
\email{skjelnes@kth.se\\gss@math.kth.se}
\subjclass[2010]{14C05, 14D22, 14N25, 13C10, 14M15, 16D99}
\keywords{Fitting ideals, Hilbert schemes, Quot schemes, Grassmannian, Strongly generated, Apolarity}
\DeclareMathOperator{\Spec}{Spec}
\newcommand{\Q}[1]{{E}^{{#1}}}
\newcommand{\Hbeta}{\operatorname{Hilb}^{\beta}}
\newcommand{\Gr}[1]{D_{+}({#1})}
\newcommand{\xbeta}[1]{z^{{#1}}\beta}
\newcommand{\Fitt}{\operatorname{Fitt}}
\newcommand{\VPS}[2]{{\operatorname{VPS}^{{#1}}_{{#2}}}}
\newcommand{\ra}{\longrightarrow}
\newcommand{\Proj}{\mathbb{P}}
\newcommand{\A}{\mathbb{A}}
\newcommand{\eqbeg}{\begin{equation}}
\newcommand{\eqend}{\end{equation}}
\newcommand{\Sgot}{\mathfrak{S}}
\newcommand{\m}{\lambda}
\newcommand{\calI}{\mathscr{I}}
\newcommand{\calB}{\mathscr{B}}
\newcommand{\calO}{\mathscr{O}}
\newcommand{\Snbimap}{\xymatrix@M=1pt{ {\Sgot_n\times U}
\ar@<.5ex>[r] \ar@<-.5ex>[r] & {U}}}
\newcommand{\bimap}{\xymatrix@M=1pt{ {R}
\ar@<.5ex>[r] \ar@<-.5ex>[r] & {X}}}
\newcommand{\pbbimap}{\xymatrix@M=1pt{ {R\times_{A}Y}
\ar@<.5ex>[r] \ar@<-.5ex>[r] & {X\times_AY}}}
\newcommand{\grpbimap}{\xymatrix@M=1pt{ {G\times X}
\ar@<.5ex>[r] \ar@<-.5ex>[r] & {X}}}
\newcommand{\pibimap}{\xymatrix@M=1pt{ {R}
\ar@<.5ex>[r]^-{\pi_1} \ar@<-.5ex>[r]_-{\pi_2} & {X}}}
\newcommand{\quotmap}{\xymatrix@M=1pt{ {R/G}
\ar@<.5ex>[r]^-{\pi_1} \ar@<-.5ex>[r]_-{\pi_2} & {U/G}}}
\newtheorem{summary}{Theorem}
\newtheorem{thm}[subsection]{Theorem}
\newtheorem{lemma}[subsection]{Lemma}
\newtheorem{cor}[subsection]{Corollary}
\newtheorem{prop}[subsection]{Proposition}
\theoremstyle{definition}
\newtheorem{defn}[subsection]{Definition}
\newtheorem{ex}[subsection]{Example}
\theoremstyle{remark}
\newtheorem{rem}[subsection]{Remark}
\numberwithin{equation}{subsection}
\begin{document}
\maketitle

\begin{abstract}
We describe explicitly how certain standard opens of the Hilbert scheme of points are embedded into Grassmannians.  The standard opens of the Hilbert scheme that we consider are given as the intersection of a corresponding basic open affine of the Grassmannian and a closed stratum determined by a Fitting ideal.
\end{abstract}

\section{Introduction}

The main result in this article is an explicit projective embedding of the standard open subschemes of the Hilbert scheme of points in affine space. By taking unions we get an embedding of certain natural subschemes of the Hilbert scheme of points.  Apolarity schemes, commutator ideals and the space of non-degenerate families of $n+1$ points in projective $n$-space are examples and applications of our result.

A standard open subscheme $\Hbeta \subseteq \operatorname{Hilb}^N_X$ of the Hilbert scheme of~$N$ points on an affine scheme $X$ is determined by a sequence $\beta$ of global sections with $|\beta|=N$. The scheme $\Hbeta$ is characterized by parameterizing all closed subschemes of the ambient scheme such that $\beta$ forms a basis for their global sections. Standard open subschemes seem to first have appeared in Haimans article \cite{MR1661369} as a tool in his description of the Hilbert scheme of points in the complex affine plane. The definition of these schemes $\Hbeta$ were formalized and generalized for Hilbert scheme of points on general affine schemes, independently by several authors  \cite{MR2161988}, \cite{MR2324602} 
 and \cite{MR2221028}. 

Considerable interest have been devoted to the schemes $\Hbeta$, and in particular with the Hilbert scheme of points on affine $n$-space and with $\beta$ a sequence of monomials satisfying the order ideal condition, see e.g. \cite{MR2452308}, \cite{MR2116166} 
and \cite{MR2826478}.

However, what have been missing from the discussion of the schemes $\Hbeta$ is a  natural projective embedding. The schemes $\Hbeta$ are defined and realized abstractly, and they glue canonically for different choices of $\beta$. The information is however local, and it has not been clear how to realize these charts and their unions in a projective space. In the present article we give a precise answer to that question. 

Having the sequence of monomials $\beta$ fixed we let $\xbeta{d}$ denote the degree $d$ homogenization, and we view $\xbeta{d}$ as global sections of the degree $d$-forms on projective $n$-space. These global sections $\xbeta{d}$ determine a basic open affine $\Gr{\xbeta{d}}$ of the Grassmannian $\mathbb{G}$ of locally free, rank $N=|\beta|$ quotients of the vector space $V_d$ of $d$-forms on projective $n$-space. We prove the following result.

\begin{summary} Let $\beta$ be an order ideal sequence of monomials in the polynomial ring $A[t_1, \ldots, t_n]$. Let $d$ be an integer such that $d\geq d(\beta)+1$, where $d(\beta)$ is the highest degree of the monomials in $\beta$. Then the standard open subscheme $\Hbeta$ of the Hilbert scheme of $N=|\beta|$ points on affine $n$-space $\A^n_A$, is given as the locally closed subscheme
$$ \Hbeta =\Gr{\xbeta{d}}\cap \operatorname{Fitt}_{N-1}(E)\subseteq \mathbb{G}=\operatorname{Grass}^N(V_d).$$
Here $\operatorname{Fitt}_{N-1}(E)$ is the closed stratum determined by the $(N-1)$'th Fitting ideal of a coherent sheaf $E$ (see Definition \ref{quotient}) on the Grassmannian $\mathbb{G}$.
\end{summary}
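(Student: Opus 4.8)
The plan is to exhibit mutually inverse morphisms between the functor of points of $\Hbeta$ and that of the locally closed subscheme $U := \Gr{\xbeta{d}}\cap\Fitt_{N-1}(E)$ of $\mathbb G$, and then conclude by Yoneda. First I would construct the comparison morphism $\Hbeta\ra\mathbb G$. Let $Z\subseteq\A^n_T$ be a $T$-point of $\Hbeta$, so that $Z$ is finite and flat of degree $N$ over $T$ with $\pi_*\calO_Z$ free on the basis $\beta$. Viewing $\A^n_A=\Gr{z}\subseteq\Proj^n_A$, the scheme $Z$ is already proper over $T$, hence closed in $\Proj^n_T$ and disjoint from the hyperplane $\{z=0\}$; consequently $z$ restricts to a nowhere-vanishing section of $\calO_Z(1)$ and trivializes the twist, giving a canonical isomorphism $\pi_*\calO_Z(d)\cong\pi_*\calO_Z$. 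Restriction of $d$-forms then yields a surjection
\[ V_d\otimes_A\calO_T \ra \pi_*\calO_Z(d)\cong\pi_*\calO_Z \]
onto a locally free rank $N$ sheaf, i.e.\ a $T$-point of $\mathbb G$. Since the homogenized monomial $z^{d-\deg m}m$ restricts, after the trivialization by $z^{-d}$, to the affine class of $m$, the family $\xbeta{d}$ is carried to the basis $\beta$, so this $T$-point factors through $\Gr{\xbeta{d}}$.

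Next I would check that the morphism factors through $\Fitt_{N-1}(E)$. Recalling from Definition~\ref{quotient} that $E$ governs the degree-$(d+1)$ part of the ideal generated by the kernel of the universal quotient (equivalently, is assembled from the border multiplications $t_i\cdot(-)$ out of the universal subsheaf), for the genuinely flat finite family $Z$ these maps have the expected rank, so the fibre of $E$ has dimension exactly $N$ and the $(N-1)$'st Fitting ideal pulls back to zero. This is where the hypothesis $d\geq d(\beta)+1$ first enters: it guarantees that for every $m\in\beta$ and every variable $t_i$ the border homogenization $z^{d-\deg m-1}t_im$ is a genuine element of $V_d$, so that multiplication by $t_i$ on $\pi_*\calO_Z$ can be read off from the quotient map.

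Conversely, over $U$ I would reconstruct a family from the universal quotient $q\colon V_d\otimes\calO_U\ra\calQ$, which by definition of $\Gr{\xbeta{d}}$ is free on the images of $\xbeta{d}$. For each $i$, define an operator $M_i\colon\calQ\ra\calQ$ by sending the basis vector coming from $m\in\beta$ to $q(z^{d-\deg m-1}t_im)$; these are well defined precisely because $d\geq d(\beta)+1$. The content of the Fitting condition is that on $U$ the fibre dimension of $E$, which is at most $N$ everywhere on $\Gr{\xbeta{d}}$, in fact equals $N$; I would show that this forces the $M_i$ to commute and to endow $\calQ$ with the structure of a flat family of $A[t_1,\dots,t_n]\otimes\calO_U$-algebras, finite of degree $N$ with $\beta$ a basis. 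Taking $\Spec$ then produces a closed subscheme of $\A^n_U$ that is a $U$-point of $\Hbeta$, and I would finish by tracing the trivializations by $z^d$ through both constructions to see they are mutually inverse.

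The main obstacle is this reconstruction step: translating the purely linear-algebraic vanishing of $\Fitt_{N-1}(E)$ into the algebraic assertion that the operators $M_i$ commute and cut out a flat, length-$N$ quotient algebra. Concretely one must prove that the expected-rank condition $\dim_{k(\mathfrak p)}E(\mathfrak p)=N$ is equivalent to the ideal generated by the kernel defining a length-$N$ subscheme with prescribed standard-monomial basis $\beta$. Here the order-ideal hypothesis on $\beta$ is essential, since it controls exactly which border monomials occur and is what ensures that the fibre dimension of $E$ never exceeds $N$ on $\Gr{\xbeta{d}}$, so that the closed stratum $\{\dim E(\mathfrak p)\geq N\}=V(\Fitt_{N-1}(E))$ coincides with the Hilbert-scheme locus $\{\dim E(\mathfrak p)=N\}$.
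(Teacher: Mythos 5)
Your functorial framing is sound and you have correctly located where the hypotheses enter, but the proposal has a genuine gap at exactly the step you flag as ``the main obstacle'': you never actually prove that the vanishing of $\Fitt_{N-1}$ of the single degree-$(d+1)$ piece forces the operators $M_i$ to commute and $\calQ$ to carry a flat, rank-$N$ algebra structure --- you only announce that you ``would show'' it. This is not a routine verification; it is the entire mathematical content of the theorem. The paper resolves it by Proposition~\ref{Gustav} (a reformulation of \cite[Proposition 2.7]{GFitting}), a persistence statement for order ideals: once $\xbeta{d}$ is a basis of $\Q{J}_d$ and $\Fitt_{N-1}(\Q{J}_{d+1})=0$, the graded quotient $\calO[z,x_1,\ldots,x_n]/J$ is free of rank $N$ in \emph{every} degree $m\geq d$, which directly yields the flat family $Z\subseteq\Proj^n_A\times_A\Hbeta$ and hence the universal property. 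The commutator route you sketch is legitimate in principle (it is the approach of \cite{ABM}), but note that in this paper the equivalence between the Fitting condition and the commutator condition is itself a theorem, Proposition~\ref{noncommutative}, whose proof \emph{uses} Proposition~\ref{Gustav} in both directions --- so you cannot invoke it as a shortcut around the persistence result. Similarly, your auxiliary claim that the fibre dimension of $E$ never exceeds $N$ on $\Gr{\xbeta{d}}$ (so that the Fitting stratum is the expected-rank locus) is precisely the generation statement that $\xbeta{d+m}$ generates $\Q{J}_{d+m}$, i.e.\ \cite[Corollary 2.5]{GFitting}, again asserted but not proved.

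There is also a smaller unjustified step in your forward direction. To conclude that a genuine flat family $Z$ lands in $\Fitt_{N-1}(E)$, you need the sheaf $E$ --- which is built from the degree-$(d+1)$ part of the ideal \emph{generated by} the kernel $J_d$ --- to pull back to something of rank exactly $N$; this requires knowing that the full homogeneous ideal of $Z\subseteq\Proj^n_T$ is generated in degree $d$, so that $V_1\cdot J_d$ exhausts its degree-$(d+1)$ part. That is Proposition~\ref{gen} of the paper (generation in degree $d(\beta)+1\leq d$, using the order-ideal structure and the multiplication relations $t_i\mu=\sum_\lambda a_{i,\lambda}\lambda$); your remark about border homogenizations lying in $V_d$ gestures at this but does not establish it. In summary: the architecture of your argument matches the theorem, but both directions delegate their key lemmas (degree-$d$ generation, and Fitting-ideal persistence or equivalently commutativity of the border operators) to assertions that remain unproven, and these are exactly the nontrivial inputs the paper supplies via Propositions~\ref{gen} and~\ref{Gustav}.
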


Having the length $N$ of the sequence $\beta$ fixed, the sheaf $E$ and the Grassmannian $\mathbb{G}$ depend only on the integer $d$. Thus by fixing $d$, and considering unions, we obtain a description of certain natural subschemes of the Hilbert scheme of points in affine $n$-space, as well as projective $n$-space. We can vary $\beta$ in $A[t_1, \ldots, t_n]$ with $d(\beta)+1\leq d$, and we can in addition vary which hyperplane in projective $n$-space whose complement is our affine $n$-space. The resulting quasi-projective scheme parametrizes closed, length~$N$, subschemes in projective $n$-space, that fiberwise have global sections given by a sequence $\beta$ with $d(\beta)+1\leq d$. We refer to such subschemes as $d$-strongly generated of length $N$ (see Section 3).  These schemes are particular instances of the bounded regularity schemes introduced in \cite{BBRoggero}. For the space of $d$-strongly generated subschemes of length $N$, our result yields an explicit embedding of these parameter schemes. 

We work out a particular example, the 2-strongly generated subschemes of projective $n$-space, of length $n+1$. These subschemes are particularly simple being cut out by quadrics, and we describe how the quasi-projective parameter space is embedded in the Grassmannian of rank $n+1$ quotients of the vector space of degree two forms.

An application of our explicit description is within the theory of apolarity schemes. The scheme $\VPS{n+1}{Z}$ of length $n+1$ subschemes in $\Proj^n$ apolar to the annihilator scheme $Z$ of a smooth quadratic surface was introduced in \cite{RanestadSchreyer} (see Section~\ref{apolarity}). We give a direct argument reproving their result that the space $\VPS{n+1}{Z}$ is closed in the Grassmannian of rank $n+1$ quotients of the vector space of two-forms on projective $n$-space.


In our main result the chosen integer $d$  can typically be lower than the regularity of the subschemes of length $N$. This is in contrast to the assumptions of Gotzmann's persistence theorem \cite{Gotzmann}, a result typically used when considering embeddings of the Hilbert scheme (e.g. \cite{IK}, \cite{BBRoggero}, \cite{ABM}). One advantage with our approach is that we get embeddings of the standard open subschemes $\Hbeta$ in a lower dimensional Grassmannian than what would be possible if using Gotzmann's persistence theorem. Moreover, we can apply our methods to get similar descriptions of Quot schemes, and these generalizations are outlined in the Appendix.

We also establish an unexpected connection between commutators and Fitting ideals. Central in the constructions of the local open schemes $\Hbeta$ are the commutator relations arising from matrices operating on certain vector spaces. We show that the ideal generated by the commutator relations equals the Fitting ideal arising from the graded, global situation. That connection ties together and explains the two different descriptions of the embeddings of the Hilbert scheme of points that appear in \cite{ABM} and in \cite{GFitting} and \cite{QuotinGrass}.

\section{Order ideals and Grassmannians}

\subsection{Order ideals} Let $\beta$ be a finite  collection of monomials in the polynomial ring $A[t_1, \ldots , t_n]$ over a commutative, unitary ring $A$.  We assume that $\beta$ has the property that it contains all its divisors: for any ${\m}\in \beta$, it holds that
\begin{equation}\label{monomial crit} {\m}=t_i^d{\m}' \Rightarrow t_i^{d-1}{\m}' \in \beta,
\end{equation}
for any $i=1, \ldots, n$ and any $d\geq 1$. Such a set $\beta$ is commonly referred to as an {\em order ideal}, see e.g. \cite{MR2161988}.
\begin{lemma} \label{lem:hasbasis}
Let $I\subseteq k[t_1, \ldots, t_n]$ be an ideal in the polynomial ring over a field $k$, such that the quotient ring is a finite dimensional vector space. Then there exists a  sequence $\beta$ of monomials satisfying \eqref{monomial crit}, and such that the images of $\beta$ form a $k$-vector basis for the quotient ring.
\end{lemma}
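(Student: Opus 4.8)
The plan is to produce $\beta$ as the set of \emph{standard monomials} attached to a term order on $k[t_1,\ldots,t_n]$. First I would fix a monomial order $<$, i.e.\ a well-ordering of the monomials that is compatible with multiplication and has $1$ as its least element (for instance the degree-lexicographic order); for such an order every proper divisor of a monomial is strictly smaller. I then form the initial ideal $\operatorname{in}_<(I)$, the monomial ideal generated by the leading monomials of all nonzero elements of $I$, and set $\beta$ to be the collection of monomials $\lambda$ with $\lambda\notin\operatorname{in}_<(I)$.

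The key classical input is that these standard monomials descend to a $k$-vector space basis of $k[t_1,\ldots,t_n]/I$. Spanning follows from the division algorithm: reducing any polynomial modulo a Gr\"obner basis of $I$ rewrites it, modulo $I$, as a $k$-linear combination of standard monomials. Linear independence follows because a nonzero $k$-linear combination of standard monomials has a well-defined leading monomial, which is itself standard; were the combination in $I$, that leading monomial would lie in $\operatorname{in}_<(I)$, a contradiction. Since $k[t_1,\ldots,t_n]/I$ is finite-dimensional by hypothesis, the basis $\beta$ is finite.

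It remains to check that $\beta$ satisfies the order ideal condition \eqref{monomial crit}. This is automatic for the complement of any monomial ideal: suppose $\lambda\in\beta$ and $\lambda=t_i^d\lambda'$. If the divisor $t_i^{d-1}\lambda'$ were in $\operatorname{in}_<(I)$, then $\lambda=t_i\cdot(t_i^{d-1}\lambda')$ would be a multiple of an element of the monomial ideal $\operatorname{in}_<(I)$ and hence lie in it, contradicting $\lambda\in\beta$. Thus $t_i^{d-1}\lambda'\notin\operatorname{in}_<(I)$, i.e.\ $t_i^{d-1}\lambda'\in\beta$, as required. The only substantive step is the basis property of the standard monomials; everything else is formal, so the main obstacle is simply having the division algorithm and the notion of a Gr\"obner basis available, after which the argument is immediate.
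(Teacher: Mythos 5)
Your proof is correct, and it is the canonical way to flesh out the paper's one-line proof: the paper merely appeals to the $k[t_1,\ldots,t_n]$-module structure of the quotient, and the standard-monomial (Gr\"obner basis) argument you give --- basis property via the division algorithm and leading-term independence, order ideal property because the complement of the monomial ideal $\operatorname{in}_<(I)$ is closed under taking divisors --- is exactly the standard instantiation of that remark. No gaps; your single-step divisor check suffices for condition \eqref{monomial crit} as stated.
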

\begin{proof} This is a consequence of the quotient set having a $k[t_1, \ldots, t_n]$-module structure.
\end{proof}

\subsection{Homogenized monomials} Let $\beta$ be a sequence of monomials in $A[t_1, \ldots, t_n]$, and let
\begin{equation}\label{degree}
 d(\beta)=\operatorname{max}\{\operatorname{deg}({\m}) \mid {\m}\in \beta\}.
\end{equation}
Let $R=A[z, x_1, \ldots, x_n]$ denote the graded polynomial ring in the variables $z, x_1, \ldots, x_n$ over $A$, all having degree one, and let $A[t_1, \ldots, t_n]$ denote the degree zero part of the localization of $R$ with respect to $z$. That is, $t_i=x_i/z$ for any $i=1, \ldots, n$. For any $d\geq d(\beta)$ we denote by
$$ \xbeta{d} =\{ z^d\m \mid \m \in \beta\},$$
considered as a sequence of degree $d$ monomials in $A[z, x_1, \ldots, x_n]$. The cardinality of the sequence $\xbeta{d}$ equals that of $\beta$.  

\subsection{Closed subschemes of projective space}  We say that a closed subscheme ${\Gamma\subseteq\Proj^n_A}$ is \emph{generated in degree $d$} if it is defined by a homogeneous ideal $J\subseteq A[z,x_1,\ldots,x_n]$ generated in degree $d$.

\begin{prop}\label{gen} Let $\Gamma \subseteq \A^n_A$ be a finite closed subscheme, and let $\beta$ be a sequence of monomials in $A[t_1, \ldots, t_n]$ that forms a basis for the global sections of $\Gamma$. We view $\Gamma$ as a closed subscheme in $\Proj^n_A$ by identifying $\A^n_A$ as a hyperplane complement. Then we have that $\Gamma$ is generated in degree $d(\beta)+1$, where $d(\beta)$ is the top degree of the monomials \eqref{degree}.
\end{prop}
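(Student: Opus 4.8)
The plan is to realize $\Gamma$ inside $\Proj^n_A$ by the homogenization ideal and to analyse its graded pieces one degree at a time. Write $B=A[t_1,\dots,t_n]/I$ for the coordinate ring of $\Gamma\subseteq\A^n_A$, so that $\{\bar\m\}_{\m\in\beta}$ is an $A$-basis of $B$ with $N=|\beta|$. Let $J=I^{h}\subseteq R=A[z,x_1,\dots,x_n]$ be the homogenization of $I$, and set $d_0=d(\beta)+1$. First I would record two standard facts about homogenization, valid over an arbitrary base ring $A$: that $z$ is a nonzerodivisor on $R/J$, and that $\operatorname{Proj}(R/J)$ is the closure of $\Gamma$ in $\Proj^n_A$, which equals $\Gamma$ itself since $\Gamma$ is finite, hence proper, hence already closed. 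The nonzerodivisor claim follows by dehomogenizing: if $zf\in J$ with $f$ homogeneous, then $f|_{z=1}\in I$, and rehomogenizing returns $f$ up to a power of $z$, forcing $f\in J$.

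Next I would pin down the graded pieces of $R/J$ in high degree. For $\m=t^{\alpha}\in\beta$ let $\hat\m=x^{\alpha}$ be its homogenization, and consider the $A$-linear map $\phi_e\colon (R/J)_e\to B$, $f\mapsto f/z^{e}$. Because $z$ is a nonzerodivisor, $\phi_e$ is injective; because every monomial of $\beta$ has degree at most $d(\beta)\le e$, the image of $\phi_e$ contains all of $\{\bar\m\}$ and hence is onto. Thus for every $e\ge d(\beta)$ the map $\phi_e$ is an isomorphism, so $(R/J)_e$ is free with the explicit basis $\{z^{e-\deg\m}\hat\m\}_{\m\in\beta}$. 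Reducing each remaining degree-$e$ monomial against this basis then exhibits a free basis $\{r_M\}$ of $J_e$, with one relation $r_M=M-\sum_{\m}c_{\gamma,\m}\,z^{e-\deg\m}\hat\m$ for each degree-$e$ monomial $M=z^{a}x^{\gamma}$ not already of the form $z^{e-\deg\m}\hat\m$; here the coefficients $c_{\gamma,\m}\in A$ express $t^{\gamma}$ in the basis $\beta$ and so depend only on the $x$-exponent $\gamma$.

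The heart of the argument is to show $J_{e+1}=R_1\cdot J_e$ for all $e\ge d_0$; induction then gives $J_e=R_{e-d_0}\cdot J_{d_0}$ for all $e\ge d_0$, so the subideal $J'=(J_{d_0})$ agrees with $J$ in every degree $\ge d_0$ and therefore cuts out the same closed subscheme $\Gamma$, now by forms of degree exactly $d_0=d(\beta)+1$. To prove the generation step I would check that each basis relation $r_M\in J_{e+1}$ lies in $R_1 J_e$. If $z\mid M$, say $M=zM'$, then since the coefficients depend only on the $x$-exponent one gets literally $r_M=z\,r_{M'}$ with $r_{M'}\in J_e$. If instead $M=x^{\gamma}$ is a pure $x$-monomial, pick $i$ with $\gamma_i\ge 1$ and multiply the degree-$e$ relation $r_{x^{\gamma-e_i}}$ by $x_i$. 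The crucial point — and the real obstacle to getting the sharp bound $d(\beta)+1$ rather than a weaker eventual statement — is that for $e\ge d_0$ and any $\m\in\beta$ one has $e-\deg\m\ge d_0-d(\beta)=1$, so every tail term $z^{e-\deg\m}\hat\m\,x_i$ is divisible by $z$. Consequently $x_i\,r_{x^{\gamma-e_i}}$ and $r_M$ differ by an element of the form $z\,w$ with $w\in R_e$, and this difference lies in $J_{e+1}$; since $z$ is a nonzerodivisor on $R/J$, $zw\in J_{e+1}$ forces $w\in J_e$, whence $r_M=x_i\,r_{x^{\gamma-e_i}}+zw\in R_1 J_e$. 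Having verified both cases, I would conclude that $J'=(J_{d_0})$ defines $\Gamma$ and is generated in degree $d(\beta)+1$, which is exactly the assertion.
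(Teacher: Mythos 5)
Your proof is correct --- I checked the degreewise basis claims and both cases of the generation step --- but it takes a genuinely different route from the paper's. The paper argues on the affine side: since $\beta$ is an order ideal (in particular $1\in\beta$), the multiplication relations $t_i\mu-\sum_{\lambda\in\beta}a_{i,\lambda}\lambda$, of degree at most $d(\beta)+1$, generate the ideal $I\subseteq A[t_1,\ldots,t_n]$ defining $\Gamma$, and one then homogenizes; the delicate point that homogenization does not commute with ``generated in degree $\leq d$'' is absorbed into the paper's definition (it suffices that \emph{some} defining ideal be generated in degree $d$) and, in the rest of the paper, into Proposition~\ref{Gustav}, i.e.\ \cite[Proposition~2.7]{GFitting}. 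You instead work directly with $J=I^h$ on the projective side, exhibit the free basis $\{z^{e-\deg\mu}\hat\mu\}_{\mu\in\beta}$ of $(R/J)_e$ for $e\geq d(\beta)$ and the induced basis $\{r_M\}$ of $J_e$, and prove $J_{e+1}=R_1\cdot J_e$ for $e\geq d(\beta)+1$ via the $z$-nonzerodivisor trick --- in effect giving a self-contained proof of the relevant special case of Proposition~\ref{Gustav} rather than quoting it. Your route buys two things: it isolates exactly where the sharp bound enters (every tail term $z^{e-\deg\mu}\hat\mu\, x_i$ is divisible by $z$ precisely because $e-\deg\mu\geq 1$ when $e\geq d(\beta)+1$), and it uses only that $\beta$ is a monomial basis with top degree $d(\beta)$, not the order-ideal condition \eqref{monomial crit} which the paper's affine argument implicitly needs --- without $1\in\beta$ the multiplication relations need not generate $I$ (e.g.\ $\beta=\{t,t^2\}$ for $k[t]/(t^2+t-1)$), whereas your degreewise argument goes through verbatim. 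What the paper's proof buys is brevity, and the structure constants $a_{i,\lambda}$ it introduces are reused later (e.g.\ in the matrices $T_i$ of Section~5).

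One minor gloss on your side: the identification of $\operatorname{Proj}(R/J)$ with $\Gamma$ over an arbitrary base ring $A$ is cleaner without any appeal to closures. Your own computation shows that for $e\geq d(\beta)+1$ every basis element of $(R/J)_e$ is divisible by $z$, hence $\bigl(R/(J+(z))\bigr)_e=0$ in those degrees, so $V_+(J)\cap V_+(z)=\emptyset$; combined with $V_+(J)\cap D_+(z)=\operatorname{Spec}\bigl(A[t_1,\ldots,t_n]/I\bigr)=\Gamma$ (dehomogenization, using that $I^h$ consists exactly of the homogeneous elements whose dehomogenization lies in $I$), this gives $V_+(J)=\Gamma$ as schemes directly. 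As written, ``$\operatorname{Proj}(R/J)$ is the closure of $\Gamma$, which is already closed by properness'' is fine in spirit but leans on a scheme-theoretic-image statement over a general $A$ that your bases already render unnecessary.
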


\begin{proof} By assumption the monomials in $\beta$ form a basis for the $A$-module $E$ of global sections of $\Gamma$. The algebra structure on $E$ is then completely determined by $t_i\mu$, where $\mu \in \beta$ and $i=1, \ldots, n$. We have that
\begin{equation}\label{mon rel} t_i\mu =\sum_{\lambda \in \beta}a_{i,\lambda}\lambda,
\end{equation}
for some scalars $a_{i, \lambda}$ in $A$. The relation arising from \eqref{mon rel} is a polynomial of degree at most $d(\beta)+1$. Therefore the degree of the generators of the ideal $I\subseteq A[t_1, \ldots, t_n]$ defining $\Gamma$ is bounded by $d(\beta)+1$.
The homogenization of $I$ with respect to $z$ is then generated in degree $d(\beta)+1$, and determines the closed subscheme $\Gamma \subseteq \Proj^n_A$.
\end{proof}

\subsection{Fitting ideals and graded quotients} 
The $A$-module of degree $m$-forms in the standard graded polynomial ring $A[z, x_1, \ldots, x_n]$ we will denote by $V_m$. Thus 
$$\xymatrix{ A[z, x_1, \ldots, x_n]=\bigoplus _{m\geq 0}V_m.}$$
If $J\subseteq A[z,x_1, \ldots, x_n]$ is a graded ideal, we let 
\begin{equation}\label{quotient}
\xymatrix{\Q{J}=\bigoplus_{m\geq0}\Q{J}_m =A[z, x_1, \ldots, x_n]/J}
\end{equation}
denote the graded quotient. Thus, for each $m$  we have that $\Q{J}_m=V_m/J_m$. Finally, for a finitely generated $A$-module $E$ we let $\mathcal{F}itt_N(E)\subseteq A$ denote the $N$'th Fitting ideal of $E$, see \cite[\S3]{Northcott}, \cite[\S20.2]{eisen-comalg}. 

\begin{prop}\label{Gustav}Let $\beta$ be a sequence of monomials in $A[t_1, \ldots, t_n]$ satisfying \eqref{monomial crit}. Fix $d\ge d(\beta)+1$, and let $J\subseteq A[z, x_1, \ldots, x_n]$ be a homogeneous ideal generated in degree $d$. Assume that  $\xbeta{d}$ form a basis for the degree $d$ quotient module 
$ \Q{J}_d = (A[z, x_1, \ldots ,x_n]/J)_d.$
Then we have the equality of Fitting ideals
$$ \mathcal{F}itt_{N-1}(\Q{J}_{d+m})=\mathcal{F}itt_{N-1}(\Q{J}_{d+1}),$$
and we have that $\xbeta{d+m}$ generates $\Q{J}_{d+m}$, for all $m\geq 1$, where $N=|\beta |$. 
\end{prop}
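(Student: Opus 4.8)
The plan is to establish the generation statement first and then to feed it into a computation of the Fitting ideals through the module of syzygies of the $N$ generators $\xbeta{d+m}$.

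\emph{Generation.} I would prove that $\xbeta{d+m}$ generates $\Q{J}_{d+m}$ for all $m\geq 1$ by induction on $m$, the case $m=0$ being the hypothesis that $\xbeta{d}$ is a basis. Since every degree $d+m+1$ monomial is a variable times a degree $d+m$ monomial, $\Q{J}_{d+m+1}$ is spanned by $z\cdot\Q{J}_{d+m}$ and the $x_i\cdot\Q{J}_{d+m}$, hence by the products $z\cdot z^{d+m}\mu$ and $x_iz^{d+m}\mu$ with $\mu\in\beta$. The first lies in $\xbeta{d+m+1}$. For the second I would write $x_iz^{d+m}\mu=z^{d+m+1}(t_i\mu)$; because $d\geq d(\beta)+1$ the monomial $t_i\mu$ has degree at most $d$, so $z^{d+m}(t_i\mu)$ is a genuine degree $d+m$ monomial, which by the inductive hypothesis reduces modulo $J_{d+m}$ to an $A$-combination of $\xbeta{d+m}$. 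Multiplying that reduction by $z$ and using $zJ_{d+m}\subseteq J_{d+m+1}$ expresses $x_iz^{d+m}\mu$ modulo $J_{d+m+1}$ in terms of $\xbeta{d+m+1}$. In particular multiplication by $z$ is a surjection $\Q{J}_{d+m}\ra\Q{J}_{d+m+1}$.

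\emph{The Fitting ideals.} Writing $p_m\colon A^N\ra\Q{J}_{d+m}$ for the surjection sending the standard basis to $\xbeta{d+m}$ and $S_m=\ker p_m$ for the syzygy module, the presentation $S_m\ra A^N\ra\Q{J}_{d+m}\ra 0$ shows $\mathcal{F}itt_{N-1}(\Q{J}_{d+m})=I_1(S_m)$, the ideal generated by all coordinates of all syzygies. Multiplying a syzygy by $z$ preserves its coordinates and produces a syzygy one degree higher, so $S_m\subseteq S_{m+1}$ and $\mathcal{F}itt_{N-1}(\Q{J}_{d+m})\subseteq\mathcal{F}itt_{N-1}(\Q{J}_{d+m+1})$; it remains to prove the reverse inclusion, and it suffices to do so for consecutive indices. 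I would introduce the structure matrices $M_i=(a^{l}_{ij})$, where $z^{d}(t_i\mu_j)\equiv\sum_l a^{l}_{ij}z^{d}\mu_l\ (\mathrm{mod}\ J_d)$ is the reduction in degree $d$ (legitimate since $\deg(t_i\mu_j)\leq d$), and the commutator ideal $\mathfrak{c}=I_1(\{M_iM_{i'}-M_{i'}M_i\})$. Since $t_it_{i'}\mu_j$ has degree at most $d+1$, both $p_1(M_iM_{i'}e_j)$ and $p_1(M_{i'}M_ie_j)$ equal $\overline{z^{d+1}(t_it_{i'}\mu_j)}$; the symmetry $t_it_{i'}=t_{i'}t_i$ then shows each column of $M_iM_{i'}-M_{i'}M_i$ lies in $S_1$, whence $\mathfrak{c}\subseteq\mathcal{F}itt_{N-1}(\Q{J}_{d+1})$.

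For the reverse inclusion take $(c_j)\in S_{m+1}$, so $\sum_jc_jz^{d+m+1}\mu_j\in J_{d+m+1}$. Using that $J$ is generated in degree $d$ I would write this as $zf_0+\sum_i x_if_i$ with $f_\bullet\in J_{d+m}$; cancelling $z$ (a nonzerodivisor modulo $(x_1,\dots,x_n)$) gives $\sum_jc_jz^{d+m}\mu_j=f_0+\sum_i x_ig_i$ with $g_i\in V_{d+m-1}$, together with the Koszul relation $\sum_i x_i(zg_i-f_i)=0$, which forces $zg_i-f_i=\sum_{i'}x_{i'}\lambda_{ii'}$ with $\lambda_{ii'}=-\lambda_{i'i}$. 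Reducing each $g_i$ by generation in degree $d+m-1$ and expressing multiplication by $x_i$ through the matrices $M_i$, I would find that the coordinates of $(c_j)$ agree, modulo $I_1(S_m)$, with those of $\sum_{i,i'}M_iM_{i'}\ell_{ii'}$, where $\ell_{ii'}$ lifts $\lambda_{ii'}$ and may be chosen antisymmetric; antisymmetry collapses this into $\sum_{i<i'}(M_iM_{i'}-M_{i'}M_i)\ell_{ii'}$, whose coordinates lie in $\mathfrak{c}$. Hence the coordinates of $(c_j)$ lie in $I_1(S_m)+\mathfrak{c}$, and since $\mathfrak{c}\subseteq\mathcal{F}itt_{N-1}(\Q{J}_{d+1})\subseteq\mathcal{F}itt_{N-1}(\Q{J}_{d+m})$ this yields $\mathcal{F}itt_{N-1}(\Q{J}_{d+m+1})\subseteq\mathcal{F}itt_{N-1}(\Q{J}_{d+m})$, completing the induction.

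The hard part is exactly this reverse inclusion: organizing the division by $z$ and the Koszul bookkeeping so that the \emph{new} part of each higher syzygy is visibly governed by the commutators $M_iM_{i'}-M_{i'}M_i$, and checking that these commutators are themselves syzygies, so that $\mathfrak{c}$ is already swallowed by the Fitting ideals. The hypothesis $d\geq d(\beta)+1$ is what makes every step legitimate: it keeps the auxiliary monomials $t_i\mu$ and $t_it_{i'}\mu$ in degree at most $d$, so that all reductions can be carried out in the single degree $d$ where $\xbeta{d}$ is a basis.
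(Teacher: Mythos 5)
Your proof is correct, but it is a genuinely different argument from the paper's. The paper disposes of Proposition~\ref{Gustav} by citation: since Fitting ideals commute with base change one reduces to $A$ local, observes that the order-ideal hypothesis and $d\geq d(\beta)+1$ place $\xbeta{d}$ under Condition~1.2 of \cite{GFitting}, and then quotes Proposition~2.7 there for the Fitting-ideal equality and Corollary~2.5 for the generation statement; no self-contained argument appears. You instead prove everything directly, and I have checked that your steps go through: the induction showing that multiplication by $z$ is surjective $\Q{J}_{d+m}\ra\Q{J}_{d+m+1}$ is exactly where $d\geq d(\beta)+1$ enters (keeping $t_i\mu$ and $t_it_{i'}\mu_j$ in degrees $\leq d$ and $\leq d+1$); the reading $\mathcal{F}itt_{N-1}(\Q{J}_{d+m})=I_1(S_m)$ is correct for an $N$-generated module; and in the hard step one indeed gets $c-\sum_iM_ib_i\in S_m$ and $b_i-\sum_{i'}M_{i'}\ell_{ii'}\in S_m$, so the coordinates of $c$ lie in $I_1(S_m)+\mathfrak{c}$. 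One point to make explicit: choose the Koszul coefficients alternating, $\lambda_{ii}=0$, not merely antisymmetric — if $2$ is a zerodivisor in $A$, antisymmetry alone does not kill the diagonal terms $M_i^2\ell_{ii}$ in the collapse to $\sum_{i<i'}(M_iM_{i'}-M_{i'}M_i)\ell_{ii'}$; this is harmless since the first syzygies of the regular sequence $x_1,\ldots,x_n$ are generated by $x_ie_{i'}-x_{i'}e_i$ with $i<i'$. As for what each route buys: the paper's citation proof is short and transfers verbatim to the module setting of the Appendix (Theorem~\ref{beta scheme2}), whereas your argument is self-contained and makes the mechanism visible — your two inclusions $\mathfrak{c}\subseteq\mathcal{F}itt_{N-1}(\Q{J}_{d+1})$ and $I_1(S_{m+1})\subseteq I_1(S_m)+\mathfrak{c}$ are precisely the elementary core of the commutator--Fitting comparison that the paper only establishes later, and by a less direct base-change argument, in Proposition~\ref{noncommutative}.
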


\begin{proof} The result is a reformulation of {\cite[Proposition 2.7]{GFitting}}: as Fitting ideals commute with base change, we can assume that $A$ is a local ring. The assumption on $\beta$, and that $d\geq d(\beta)+1$, implies that $\xbeta{d}$ satisfies Condition~1.2 (op. cit.) and the Proposition 2.7 applies. The statement about $\xbeta{d+m}$ generating the quotient module follows from Corollary 2.5 (op. cit.).
\end{proof}

\subsection{Basic open affines} 

Let $\beta$ be a sequence of monomials in the polynomial ring $A[t_1, \ldots, t_n]$. We fix an integer $d\geq d(\beta)+1$, and let $N=|\beta|$ denote the cardinality of $\beta$.  The sequence
$ \xbeta{d} $
of degree $d$-forms in the graded polynomial ring 
$A[z, x_1, \ldots, x_n]$
determines the basic open affine subscheme
\begin{equation}\label{basic}\Gr{\xbeta{d}} \subseteq \mathbb{G}^N(V_{d})
\end{equation}
of the Grassmannian of locally free, rank $N$, quotients of the $A$-module of homogeneous $d$-forms $V_{d}$. The scheme $\Gr{\xbeta{d}}$ is characterized by being the open set where the images of the elements of $\xbeta{d} $ form a basis for the quotient $V_d\ra E$. See e.\,g.\ \cite[\S8.4]{MR2675155} for details.

\subsection{Fitting strata} Let $\calO_{\mathbb{G}}$ denote the structure sheaf of the Grassmannian $\mathbb{G}^N(V_d)$. On the Grassmannian we have the universal sequence
\begin{equation}\label{univ G} \xymatrix{ 0 \ar[r] & J_d \ar[r] & V_{d,\mathbb{G}} \ar[r] & E_d \ar[r] & 0 }
\end{equation}
of $\calO_{\mathbb{G}}$-modules, where $V_{d,\mathbb{G}}=V_d\otimes_A{\calO}_{\mathbb{G}}$. Then $J_d$ generates a graded ideal sheaf $J=(J_d)\subseteq {\calO}_{\mathbb{G}}[z, x_1, \ldots, x_n]$. 

We let 
\begin{equation}\label{Fitting}
\Fitt_{N-1}(\Q{J}_{d+1})\subseteq \mathbb{G}^{N}(V_d)
\end{equation} denote the closed subscheme defined by the $(N-1)$'th Fitting ideal of the ${\calO}_{\mathbb{G}}$-module $\Q{J}_{d+1} =V_{d+1,\mathbb{G}}/J_{d+1}$.

\begin{thm}\label{beta scheme} Let $A[t_1, \ldots, t_n]$ denote the polynomial ring in the variables $t_1, \ldots, t_n$ over a ring $A$. Let $\beta$ be a sequence of monomials satisfying \eqref{monomial crit}, let $N=|\beta|$ and fix an integer $d\geq d(\beta)+1$.  The functor parameterizing finite closed subschemes $\Gamma\subseteq \A^n_A$ such that $\beta$ forms a basis for the global sections of $\Gamma $ is represented by the scheme 
\[ \xymatrix{ \Hbeta =\Gr{\xbeta{d}}\cap \Fitt_{N-1}(E^{J}_{d+1}) }
\]
where $J=(J_d)\subseteq\calO_{\mathbb{G}}[z,x_1,\ldots,x_n]$ is the ideal generated by the module $J_d$ obtained from the sequence \eqref{univ G}.
The universal family we get by localizing in $z$ and taking out the degree zero part. In particular $\Hbeta$ is a locally closed subscheme of the Grassmannian $\mathbb{G}^N(V_{d})$.
\end{thm}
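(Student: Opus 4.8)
The plan is to identify the functor of points of the locally closed subscheme $\Gr{\xbeta{d}}\cap\Fitt_{N-1}(\Q{J}_{d+1})$ with the functor $\Hbeta$, and to check that the identification is natural in $T$. First I would record the two representability inputs: the basic open $\Gr{\xbeta{d}}$ is an open subfunctor of $\mathbb{G}^N(V_d)$, and $\Fitt_{N-1}(\Q{J}_{d+1})$ is a closed subfunctor, so their intersection is genuinely locally closed. Its $T$-points are then exactly those surjections $q\colon V_d\otimes_A\calO_T\ra E$ with $E$ locally free of rank $N$ such that (i) the composite $\calO_T^{N}\ra V_d\otimes_A\calO_T\xrightarrow{q}E$ determined by $\xbeta{d}$ is an isomorphism, and (ii) the $(N-1)$'th Fitting ideal of $\Q{J}_{d+1}=V_{d+1}\otimes_A\calO_T/J_{d+1}$ is zero, where $J=(J_d)$ is the ideal generated by $J_d=\ker q$.

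From such a $T$-point I would produce a member of $\Hbeta(T)$ by dehomogenization. Condition (i) places us in the situation of Proposition \ref{Gustav} (applied locally, since its hypothesis is precisely that $\xbeta{d}$ is a basis in degree $d$), which shows that $\xbeta{d+1}$ generates $\Q{J}_{d+1}$; hence $\Fitt_N(\Q{J}_{d+1})=\calO_T$. Together with condition (ii), the Fitting-ideal characterization of local freeness forces $\Q{J}_{d+1}$ to be locally free of rank $N$ with basis $\xbeta{d+1}$. Proposition \ref{Gustav} then propagates this to every degree $d+m$, so the graded quotient $\Q{J}$ is locally free of rank $N$ in all degrees $\ge d$. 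Localizing at $z$ and taking the degree-zero part turns $J$ into an ideal $I\subseteq\calO_T[t_1,\ldots,t_n]$ whose quotient is $\calO_T$-locally free of rank $N$ with basis $\beta$; this is the universal family and exhibits a finite closed subscheme $\Gamma\subseteq\A^n_T$ lying in $\Hbeta(T)$.

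Conversely, given $\Gamma\in\Hbeta(T)$, Proposition \ref{gen} shows that its homogenization is generated in degree $d(\beta)+1\le d$. Taking the degree $d$ part yields a surjection $V_d\otimes_A\calO_T\ra\Q{J}_d$ onto a locally free rank $N$ module for which $\xbeta{d}$ is a basis (since $\beta$ is a basis downstairs), so we obtain a $T$-point of $\Gr{\xbeta{d}}$. As $\Gamma$ is finite with locally free global sections, the degree $d+1$ piece $\Q{J}_{d+1}$ is again locally free of rank $N$, whence $\Fitt_{N-1}(\Q{J}_{d+1})=0$ and the point lies in the Fitting stratum. I would then verify that homogenization and dehomogenization are mutually inverse on families, using that generation in degree $\le d$ means the ideal $J$ is recovered from $J_d$ in the first construction and that the degree-zero-part passage undoes it in the second.

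The main obstacle is the base-change compatibility that underlies naturality: I must ensure that the formation of $\Q{J}_{d+1}$, of its Fitting ideal, and of the dehomogenized quotient all commute with arbitrary base change $T'\ra T$. This is where local freeness in degrees $\ge d$ is decisive, since it guarantees that $\Q{J}_{d+m}$ is computed fiberwise, and, combined with the fact that Fitting ideals commute with base change (already exploited in Proposition \ref{Gustav}), it makes both natural transformations well defined and mutually inverse. A secondary subtlety is checking that conditions (i)--(ii) carve out exactly the flat families of length $N$ and not a larger locus; here the order ideal hypothesis \eqref{monomial crit}, through Proposition \ref{Gustav}, is precisely what links the single degree $d+1$ Fitting condition to local freeness in all higher degrees, and hence to the finiteness and $\beta$-basis property of $\Gamma$.
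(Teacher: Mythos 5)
Your proposal is correct and follows essentially the same route as the paper's proof: on the basic open $\Gr{\xbeta{d}}$ the sequence $\xbeta{d}$ is a basis in degree $d$, restricting to the Fitting stratum and invoking Proposition \ref{Gustav} forces the graded quotient to be locally free of rank $N$ in every degree $\geq d$, and dehomogenization (localizing in $z$, taking degree zero) produces the universal flat, finite family with $\beta$ as fiberwise basis. You merely spell out two steps the paper compresses into ``Universal property follows'': the local-freeness criterion $\Fitt_{N-1}(\Q{J}_{d+1})=0$ together with $\Fitt_N(\Q{J}_{d+1})=\calO_T$ (the latter coming from generation by $\xbeta{d+1}$), and the converse check that any family in $\Hbeta(T)$ homogenizes, via Proposition \ref{gen}, to a $T$-point of $\Gr{\xbeta{d}}\cap\Fitt_{N-1}(\Q{J}_{d+1})$.
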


\begin{proof} By definition we have that $\xbeta{d}$ forms a basis for the universal quotient bundle $E_d$ of the Grassmannian, when restricted to the basic open subscheme $\Gr{\xbeta{d}}$. When we restrict further to the closed subscheme defined by the Fitting ideal $\operatorname{Fitt}_{N-1}(\Q{J}_{d+1})$ we obtain by Proposition \ref{Gustav} that the graded sheaf $\calO[z,x_1, \ldots, x_n]/J$ is free and of rank $N$ in every degree $m\geq d$, where $\calO$ is the structure sheaf on $\Hbeta$. In particular we get a closed subscheme $Z\subseteq \Proj^n_A\times_A \Hbeta$, which is flat, finite with relative rank $N$ over $\Hbeta$ and fiberwise the sequence $\beta$ will form a basis for its global sections. Universal property follows.
\end{proof}

\begin{rem} The scheme $\Hbeta$ described in the Theorem is the basic open subscheme of the Hilbert scheme $\operatorname{Hilb}^N_{\A^n_A}$ of $N$ points on $\A^n_A$, and $\Hbeta$ is parameterizing finite closed subschemes where $\beta$ forms a basis for its global sections. These schemes were introduced in \cite{MR2161988}, \cite{MR2324602} and \cite{MR2221028}, and can be defined for any sequence of elements $\beta$, not only for an order ideal of monomials as we consider. However, the novelty in our description is the projective embedding of these schemes. We do not describe the schemes $\Hbeta$ abstractly, but give these in terms of an explicit embedding into a Grassmannian.
\end{rem}

\begin{rem} The assumption on the integer $d$ in the Theorem is only that $d\geq d(\beta)+1$. Typically that integer $d$ is lower than the regularity for the closed subschemes in $\A^n_A$ of length $N=|\beta|$. In particular
Gotzmann's persistence theorem~\cite{Gotzmann} is not applicable in our situation. 
\end{rem}

\begin{rem}  In the Appendix we show that the above results can be naturally generalized to module quotients. That situation, with Quot schemes replacing Hilbert schemes, is more notationally complex, but otherwise quite similar.
\end{rem}

\begin{ex}\label{smallFitt} We apply our Theorem \ref{beta scheme} to describe ideals in the polynomial ring $A[t_1, t_2]$ having $\beta =\{1, t_1, t_2\}$ as a basis for the quotient ring. We let $d=2=d(\beta)+1$, and then $\xbeta{2}=\{z^2,zx_1, zx_2\}$.  The basic open $\Gr{\xbeta{2}}$ of the Grassmannian of rank 3 quotients of the $A$-module of two-forms $V_2\subseteq A[z,x_1, x_n]$ is an affine 9-space.  Let $B=A[a_i, b_i, c_i]$ where $1\leq i\leq 3,$ denote the polynomial ring in nine variables over $A$. Then $\Spec(B)=\Gr{\xbeta{2}}$, and the universal family over the basic open can be represented by the matrix
$$M =\begin{bmatrix}
1&0&0&a_1&b_1&c_1\\
0&1&0&a_2&b_2&c_2\\
0&0&1&a_3&b_3&c_3
\end{bmatrix}
$$
that fits in the exact sequence of $B$-modules
$$ \xymatrix{0 \ar[r]&  J_2 \ar[r]& V_{2,B}=V_2\otimes_AB \ar[r]^-{M} & E_2 \ar[r]& 0}. 
$$
Here $E_2$ is the free $B$-module with basis $\xbeta{2}$, and $J_2$ is the kernel of $M$. The kernel $J_2$ of $M$  is generated by the three elements 
$${\tiny a=\begin{bmatrix} a_1\\a_2\\a_3\\-1\\0\\0\end{bmatrix},\quad  b=\begin{bmatrix} b_1 \\ b_2\\b_3\\0\\-1\\0\end{bmatrix} }\quad \text{and}\quad  \tiny{c=\begin{bmatrix} c_1\\c_2\\c_3\\0\\0\\-1\end{bmatrix}}.$$ 
The nine elements we obtain by multiplying $a, b$ and $c$ with the linear forms $z,x_1$ and $x_2$ generates the degree three part $J_3$ of the ideal $J=(J_2)$. We order the monomials with the lexicographic ordering, where $z< x_1< x_2$. Then the columns in the $(10\times 9)$-matrix below, represents the generators of~$J_3$.
$$ M'=\tiny{\begin{bmatrix}
a_1 & b_1 & c_1 & 0 & 0 & 0 & 0 & 0 & 0 \\
a_2 & b_2 & c_2 & a_1 & b_1 & c_1 & 0 & 0 & 0 \\
a_3 & b_3 & c_3 & 0 & 0 & 0 & a_1 & b_1 & c_1 \\
-1 & 0 & 0 & a_2 & b_2 & c_2 & 0 & 0 & 0 & \\
0 & -1 & 0 & a_3 & b_3 & c_3 & a_2 & b_2 & c_2 \\
0 & 0 & -1 & 0 & 0 &0 & a_3 & b_3 & c_3 \\
0 & 0 & 0 & -1 & 0& 0& 0&0&0& \\
0&0&0&0&-1&0 &-1&0&0\\ 
0&0&0&0&0&-1& 0&-1&0 \\
0&0&0&0&0&0 & 0&0&-1
\end{bmatrix}}
$$
The cokernel of $M'$ is by definition $\Q{J}_3$, and the Fitting ideal $\mathcal{F}itt_2(\Q{J}_3)$ is the ideal generated by the $(7\times 7)$-minors of $M'$. By computing these minors we get that the ideal cutting out $\Hbeta$ in the affine 9-space, with coordinates $a_1, \ldots, c_3$,  is generated by the five elements
\begin{align*}
&b_2b_3-a_3c_2+b_1,&&b_2^2-a_2c_2+b_3c_2-b_2c_3-c_1,\\&
b_1b_2+b_3c_1-a_1c_2-b_1c_3,&&a_3b_2-a_2b_3+b_3^2-a_3c_3-a_1,\\
&a_2b_1-a_1b_2-b_1b_3+a_3c_1.
\end{align*}
\end{ex}

\begin{ex}\label{ex:fittingex} An efficient way to compute the Fitting ideals described in Theorem~\ref{beta scheme} is available in the 
\emph{Macaulay2}~\cite{M2} program using the package \emph{FiniteFittingIdeals}, available as of version~1.8, and constructed by the second author of the present article. The following is an example of how to use that package.

We will describe  ideals in $A[t_1,t_2]$ where $\beta=\{1, t_1, t_2, t_1^2\}$ forms a basis for the quotient ring. We chose $d=d(\beta)+1=3$. The basic open subscheme $\Gr{\xbeta{3}}$ in the Grassmannian of rank 4 quotients of the $A$-module of 3-forms $V_3\subset A[z,x_1,x_2]$, is an affine 24-space. In order to describe the parameter scheme $\Hbeta$ we need to compute the Fitting ideal $\mathcal{F}itt_3(\Q{J}_4)$, where $J$ is arising from the kernel of the universal family.  Using \emph{Macaulay2}, generators for that ideal can be computed with the commands
{\small \begin{verbatim}
S=ZZ[t_0,t_1,t_2]
G=S[a_1..a_4,b_1..b_4,c_1..c_4,d_1..d_4,e_1..e_4,f_1..f_4]
M=matrix{{1,0,0,0,a_1,b_1,c_1,d_1,e_1,f_1},
         {0,1,0,0,a_2,b_2,c_2,d_2,e_2,f_2},
         {0,0,1,0,a_3,b_3,c_3,d_3,e_3,f_3},
         {0,0,0,1,a_4,b_4,c_4,d_4,e_4,f_4}}
J3=gens ker M
J4=nextDegree(J3,3,S)
co1Fitting(J4) \end{verbatim}
}
The output will be 27 degree two polynomials in the variables $a_1, \ldots, f_4$.
\end{ex}

\section{Strongly generated subschemes} 

\begin{defn} Let $\Gamma \subseteq \A^n_k$ be a closed, finite subscheme with $k$ a field. We say that $\Gamma$ is \emph{$d$-strongly generated} if a sequence of monomials $\beta$ satisfying \eqref{monomial crit} form a basis for the vector space of global sections of $\Gamma$, with 
$$ d\geq d(\beta)+1.$$
A family of closed subschemes $\Gamma \subseteq \A^n_A$, with $A$ a commutative ring, is \emph{$d$-strongly generated} if $\Gamma$ is finite and flat over $A$, and fiberwise $d$-strongly generated.
\end{defn} 

\begin{rem} A family $\Gamma \subseteq \A^n_A$ is $d$-strongly generated, if locally on $\Spec(A)$, we can find a sequence of monomials $\beta$ satisfying~\eqref{monomial crit} that forms a basis for the global sections of $\Gamma$, and where $d\geq d(\beta)+1.$
\end{rem}

\begin{rem} By Proposition \ref{gen} we have that a $d$-strongly generated subscheme $\Gamma \subseteq \A^n_k\subset \Proj^n_k$ is, considered as a subscheme in projective $n$-space, generated in degree $d$.
\end{rem}

\begin{rem} The notion of $d$-strongly generated is stable under base changes, and forms naturally a subfunctor of the Hilbert functor  parametrizing closed subschemes $\Gamma \subseteq \A^n_A$ that are flat and of relative rank $N$ over the base $A$.
\end{rem}

\subsection{Cover} Let $d$ and $N$ be two fixed integers, and let $A[t_1, \ldots, t_n]$ denote the polynomial ring over $A$. Let $\calB$ denote the collection of all (ordered) sequences $\beta$ of monomials in $A[t_1, \ldots, t_n]$ satisfying \eqref{monomial crit}, having cardinality $|\beta|=N$, and top degree $d(\beta)+1\leq d$. Define the scheme
\begin{equation}\label{all beta} \operatorname{Gen}(d,N)=\cup _{\beta \in \calB} \Hbeta \subseteq \mathbb{G}^N(V_{d+1})
\end{equation}
where $\Hbeta$ is the scheme in Theorem \ref{beta scheme}.
\begin{prop}\label{prop:stronglygenerated} The functor that parametrizes closed subschemes $\Gamma \subseteq \A^n_A$ that are $d$-strongly generated of relative rank $N$, is represented by the scheme $\operatorname{Gen}(d,N)$ from \eqref{all beta}. If moreover $d\geq N$, then we have an equality
\[\operatorname{Gen}(d,N)=\operatorname{Hilb}^N_{\A^n_A}.\]
\end{prop}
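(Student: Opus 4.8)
The assertion splits into two parts: representability of the $d$-strongly generated functor, which I denote $\calF$, by $\operatorname{Gen}(d,N)$, and its identification with the full Hilbert scheme when $d\geq N$. I would begin with the first part by pinning down the ambient geometry. For every $\beta\in\calB$ we have $d(\beta)+1\leq d$, so Theorem \ref{beta scheme} applies with its integer taken to be $d+1$, and realizes $\Hbeta=\Gr{\xbeta{d+1}}\cap\Fitt_{N-1}(\Q{J}_{d+2})$ inside $\mathbb{G}^N(V_{d+1})$. The crucial observation is that the Fitting stratum $\Sigma:=\Fitt_{N-1}(\Q{J}_{d+2})$ is intrinsic to $\mathbb{G}^N(V_{d+1})$ and its universal sequence \eqref{univ G}, hence independent of $\beta$. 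Therefore each $\Hbeta$ is the intersection of the basic open $\Gr{\xbeta{d+1}}$ with the single closed subscheme $\Sigma$, so each $\Hbeta$ is open in $\Sigma$ and $\operatorname{Gen}(d,N)=\bigcup_{\beta\in\calB}\Hbeta$ from \eqref{all beta} is a well-defined open subscheme of $\Sigma$, covered by the charts $\Hbeta$.

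Next I would carry out the gluing. By Theorem \ref{beta scheme} each $\Hbeta$ represents the subfunctor $\calF_\beta\subseteq\calF$ of families for which $\beta$ is a global basis, and the universal families on the $\Hbeta$ are all cut out of the universal data on $\mathbb{G}^N(V_{d+1})$, so they agree on overlaps and glue to a family over $\operatorname{Gen}(d,N)$. To see that this family is universal I would verify that the $\calF_\beta$ are open subfunctors covering $\calF$. Given a $d$-strongly generated family $\Gamma\subseteq\A^n_S$ of relative rank $N$, its degree $d+1$ graded piece is fiberwise spanned by $\xbeta{d+1}$ for a local choice of $\beta$, hence locally free of rank $N$ by flatness, producing a morphism $S\to\mathbb{G}^N(V_{d+1})$; since $\Gamma$ is generated in degree $d(\beta)+1\leq d$ by Proposition \ref{gen}, its degree $d+2$ quotient coincides with the one built from the universal data and satisfies the Fitting condition, so the morphism lands in $\Sigma$. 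The condition that a fixed $\beta$ be a global basis is then exactly that this morphism factor through the basic open $\Gr{\xbeta{d+1}}$, an open condition, which is the open subfunctor property. That the $\calF_\beta$ cover $\calF$ is precisely the locality built into the definition of $d$-strongly generated, combined with Lemma \ref{lem:hasbasis} applied fiberwise. Gluing representable open subfunctors then shows that $\operatorname{Gen}(d,N)$ represents $\calF$.

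For the second part, assume $d\geq N$; the only thing to prove is that every length $N$ family is $d$-strongly generated, which by the fibral definition reduces to a single combinatorial bound: any order ideal $\beta$ of $N$ monomials satisfies $d(\beta)\leq N-1$. Indeed, if $\mu\in\beta$ has maximal degree $D=d(\beta)$, write $\mu=t_{i_1}\cdots t_{i_D}$; the $D+1$ distinct partial products $1,\, t_{i_1},\, t_{i_1}t_{i_2},\,\ldots,\,\mu$ all divide $\mu$ and hence lie in $\beta$ by repeated use of \eqref{monomial crit}, forcing $N=|\beta|\geq D+1$. Given a length $N$ subscheme over a field, Lemma \ref{lem:hasbasis} furnishes an order ideal basis $\beta$ with $|\beta|=N$, and the bound gives $d(\beta)+1\leq N\leq d$, so $\beta\in\calB$ and the subscheme is $d$-strongly generated. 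Thus $\calF$ coincides with the full Hilbert functor of $N$ points, and the representing schemes agree. I expect the main obstacle to be the first part, and specifically the open subfunctor verification: one must show that an abstract $d$-strongly generated family, presented with no global basis, genuinely defines a morphism into the fixed Grassmannian landing in $\Sigma$, and that the local charts $\Hbeta$ patch compatibly; the combinatorial second part is then routine.
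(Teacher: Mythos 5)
Your proof is correct and follows essentially the same route as the paper: both represent the functor by gluing the representable open subfunctors $\Hbeta$ of Theorem~\ref{beta scheme} inside the common, $\beta$-independent Fitting stratum of the Grassmannian, and both reduce the case $d\geq N$ to Lemma~\ref{lem:hasbasis}. Your explicit divisor-chain bound $d(\beta)\leq N-1$ for an order ideal of $N$ monomials is a detail the paper's terse proof leaves implicit, and it is exactly what is needed to conclude $d(\beta)+1\leq N\leq d$ there.
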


\begin{proof} Let $\beta$ and $\gamma$ be two sequences of monomials satisfying~\eqref{monomial crit}, with $|\beta|=|\gamma|=N$, and where $d\geq \max\{d(\beta),d(\gamma)\}+1$. 
It follows from the universal property described in Theorem~\ref{beta scheme} that the subscheme
$$ \Hbeta \cap \operatorname{Hilb}^{\gamma} \subseteq \mathbb{G}^N(V_d)$$
parametrizes closed subschemes in $\A^n_A$ where both $\beta$ and $\gamma$ form a basis for its global sections. The first statement then follows. 

If $d\geq N$, then any length $N$-quotient of $k[x_1, \ldots, x_n]$, where $k$ is any field,  has some basis $\beta$ of monomials of the form \eqref{monomial crit} by Lemma~\ref{lem:hasbasis}. Thus the schemes $\Hbeta$ cover $\operatorname{Hilb}^N_{\A^n_A}$, and we have proved the Proposition.
\end{proof}


\begin{rem} When the base field $k$ has infinite cardinality, any finite closed subscheme $\Gamma \subseteq \Proj^n_k$ is contained in a  hyperplane complement $\A^n_k\subset \Proj^n_k$. Thus, by varying different  hyperplanes one obtains, by taking the corresponding unions of \eqref{all beta}, the scheme $\operatorname{Gen}^+(d,N)$ parametrizing finite length~$N$ subschemes $\Gamma \subset \Proj^n_A$ that are flat, and fiberwise $d$-strongly generated. In particular the scheme $\operatorname{Gen}^+(d,N)$ will be a locally closed subscheme of the Grassmannian $\mathbb{G}^N(V_d)$, explicitly described as the unions of the different $\Hbeta$'s intersected with the closed Fitting ideal stratum. The described scheme $\operatorname{Gen}^+(d,N)$ is a special case of the bounded regularity locus introduced and considered in \cite{BBRoggero}. We have not given the details for such a description in general, but will give such a description for a specific example in the next section. 
\end{rem}  

\begin{rem} When the base field $k$ is infinite, and when $d\geq N$, then $d$-strongly generated length $N$ subschemes of $\Proj^n_A$ equals those parametrized by the Hilbert scheme of $N$-points in $\Proj^n_A$. Here $A$ is a $k$-algebra. In that situation the schemes $\operatorname{Gen}^+(d,N)$ we get by varying the hyperplane complement $\A^n_A\subset \Proj^n_A$ will cover the Hilbert scheme $\operatorname{Hilb}^N_{\Proj^n_A}$ of $N$ points in projective $n$-space $\Proj^n_A$. In fact, we have that
$$ \operatorname{Hilb}^N_{\Proj^n_A}=\operatorname{Fitt}_{N-1}(\Q{J}_{d+1})\subseteq \mathbb{G}^N(V_d),$$
where $J$ is the kernel of the universal family on the Grassmannian \eqref{univ G}, see \cite{QuotinGrass}, \cite{GFitting} for details.
\end{rem}

\section{Non-degenerate families of points} 

\subsection{Non-degenerate families} A closed subscheme $\Gamma\subseteq\A_k^n$  is called \emph{non-degenerate} if it is not contained in any hyperplane. A flat family $\Gamma \subseteq \A_A^n$ is non-degenerate if the fiber over any point in $\Spec(A)$ is non-degenerate. We make similar definitions for closed subschemes $\Gamma \subseteq \Proj^n_A$ being non-degenerate.

\begin{prop}\label{affine quadrics} The functor parametrizing closed subschemes $\Gamma \subseteq \A^n_A$ that are flat, finite of relative rank $n+1$, and non-degenerate is represented by the scheme $\Hbeta$ of Theorem \ref{beta scheme}, where
$$ \beta =\{ 1, t_1, \ldots, t_n\}.$$
\end{prop}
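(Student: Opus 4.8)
The plan is to reduce the statement to Theorem~\ref{beta scheme} and then carry out an elementary comparison of two subfunctors. First I would record that $\beta=\{1,t_1,\ldots,t_n\}$ satisfies \eqref{monomial crit} (each $t_i$ has divisor $1\in\beta$), that $N=|\beta|=n+1$, and that $d(\beta)=1$, so that with $d=d(\beta)+1=2$ the hypotheses of Theorem~\ref{beta scheme} hold. Thus $\Hbeta$ already represents the functor $F$ whose $A$-points are the finite closed subschemes $\Gamma\subseteq\A^n_A$ for which $\beta$ is an $A$-basis of the module $E=H^0(\Gamma,\calO_\Gamma)$ of global sections. Such a $\Gamma$ has $E\cong A^{n+1}$, hence is automatically flat and finite of relative rank $n+1$; conversely the non-degenerate functor $G$ is by definition carved out among flat, finite, relative rank $n+1$ families. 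It therefore suffices to prove $F=G$ as subfunctors of the Hilbert functor $H$ of flat, finite, relative rank $n+1$ closed subschemes of $\A^n_A$.

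Second, I would build the bridge between the two conditions by passing to fibers. For a family $\Gamma\in H(A)$ the pushforward $E=H^0(\Gamma,\calO_\Gamma)$ is a locally free $A$-module of rank $n+1=|\beta|$, and, the structure morphism being affine, its formation commutes with base change, so $E\otimes_A k(\mathfrak p)=H^0(\Gamma_{\mathfrak p},\calO_{\Gamma_{\mathfrak p}})$ for every $\mathfrak p\in\Spec(A)$. The sequence $\beta$ defines a map $\phi\colon A^{\oplus(n+1)}\to E$ of locally free modules of equal rank, and $\beta$ is an $A$-basis of $E$ precisely when $\phi$ is an isomorphism. By Nakayama's lemma this holds iff $\phi\otimes_A k(\mathfrak p)$ is an isomorphism for every $\mathfrak p$, i.e.\ iff $\beta$ is a $k(\mathfrak p)$-basis of $H^0(\Gamma_{\mathfrak p},\calO_{\Gamma_{\mathfrak p}})$ for all $\mathfrak p$. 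Hence $\Gamma\in F(A)$ iff $\beta$ is a basis of every fiber.

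Third, I would settle the equivalence over a field. Let $\Gamma\subseteq\A^n_k$ be finite of length $n+1$, so that $\dim_k H^0(\Gamma,\calO_\Gamma)=n+1=|\beta|$. Here $\beta$ is a basis iff $1,t_1,\ldots,t_n$ are linearly independent, i.e.\ iff every relation $c_0\cdot 1+\sum_{i=1}^n c_i t_i=0$ in $H^0$ is trivial. A relation with $(c_1,\ldots,c_n)\neq 0$ records exactly that $\Gamma$ lies in the hyperplane $\{c_0+\sum c_i t_i=0\}$, whereas a relation with $(c_1,\ldots,c_n)=0$ forces $c_0\cdot 1=0$ and hence $c_0=0$, since $1\neq 0$ in the nonzero ring $H^0$. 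Therefore $\beta$ is a basis iff $\Gamma$ lies in no hyperplane, i.e.\ iff $\Gamma$ is non-degenerate. Combined with the previous paragraph this gives $F(A)=G(A)$ as subsets of $H(A)$ for every $A$, so $F=G$ as subfunctors and $\Hbeta$ represents $G$.

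The individual arguments are routine; the step that requires the most care is the passage from the $A$-linear condition ``$\beta$ is a basis of $E$'' to its fiberwise reformulation, where one genuinely uses that $E$ is locally free of rank equal to $|\beta|$, so that surjectivity, injectivity and bijectivity of $\phi$ all coincide and so that a fiberwise isomorphism criterion is available. The only other subtlety is the bookkeeping in the fiberwise step that isolates the spurious relation $c_0\cdot 1=0$: this is exactly what distinguishes linear dependence of $\beta$ from containment in a hyperplane, and it is harmless because $1$ is a unit.
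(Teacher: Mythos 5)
Your proof is correct and takes essentially the same route as the paper's: reduce to Theorem~\ref{beta scheme}, pass to fibers, and identify a nontrivial linear relation among $1,t_1,\ldots,t_n$ in $H^0(\Gamma,\calO_\Gamma)$ with a hyperplane containing $\Gamma$. You are in fact somewhat more complete than the paper, which only spells out the implication non-degenerate $\Rightarrow$ $\beta$ a basis (via a ``first failing $t_i$'' variant of your relation argument) and leaves the converse direction and the Nakayama/base-change justification of the fiberwise criterion implicit.
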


\begin{proof} Let $k$ be a field, and let $\Gamma \subseteq \A^n_k$ be a closed subscheme, of rank $n+1$ and non-degenerate. Then we claim that $\beta =\{1, t_1, \ldots, t_n\}$ in the polynomial ring $k[t_1, \ldots, t_n]$ is a basis for the global sections of $\Gamma$. Because, assume that $1, t_1, \ldots, t_{i-1}$ are linearly independent, but that $t_i$ was not a basis element, for some $i>0$. We then have an  equality $t_i=a_0+\sum_{j=1}^{i-1}a_jt_j$. Such an equation would determine a hyperplane containing $\Gamma$, hence contradicting the non-degeneracy assumption. We have then shown that $\beta$ form a basis for the global sections of $\Gamma \subseteq \A^n_A$ over any fiber, hence $\beta$ form a basis for the global sections everywhere. The result then follows from Theorem \ref{beta scheme}.
\end{proof}

\begin{rem}
With $\beta =\{ 1, t_1, \ldots, t_n\}$, the scheme $\Hbeta$ 
equals the scheme $\operatorname{Gen}(2,n)$ of $2$-strongly generated subschemes of length $n+1$ from Proposition~\ref{prop:stronglygenerated}.
\end{rem}

\begin{lemma}\label{hyperplane} Let $\Gamma$ be $n+1$ distinct $k$-rational points in projective $n$-space $\Proj^n_k$ over a field $k$. If $\Gamma$ is non-degenerate there exists a hyperplane $H$ not intersecting $\Gamma$, that is $\Gamma \subseteq \Proj^n_k\setminus H=\A^n_k$.
\end{lemma}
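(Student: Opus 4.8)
The plan is to reduce everything to linear algebra over $k$ and then exhibit an explicit hyperplane, deliberately avoiding any genericity or counting argument so that the conclusion holds over \emph{every} field, finite ones included. Write the points as $P_0,\ldots,P_n$ and choose for each $P_i$ a homogeneous coordinate vector $v_i$ in the $(n+1)$-dimensional space $W=k^{n+1}$, where $\Proj^n_k=\Proj(W)$ carries homogeneous coordinates $z,x_1,\ldots,x_n$. A hyperplane is the zero locus $V(\ell)$ of a nonzero linear form $\ell\in V_1=W^{\vee}$, and $P_i\notin V(\ell)$ exactly when $\ell(v_i)\neq 0$. Thus the lemma reduces to producing a single linear form $\ell\in V_1$ with $\ell(v_i)\neq 0$ for all $i=0,\ldots,n$.

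Next I would unwind the non-degeneracy hypothesis. The points lie on a common hyperplane precisely when some nonzero $\ell$ kills every $v_i$, that is, when $v_0,\ldots,v_n$ fail to span $W$. Since there are exactly $n+1$ of them in the $(n+1)$-dimensional space $W$, spanning is equivalent to linear independence, so $\Gamma$ being non-degenerate means exactly that $v_0,\ldots,v_n$ form a $k$-basis of $W$. Here the $k$-rationality hypothesis is what lets me speak of genuine coordinate vectors in $W$, and the distinctness is then automatic, since independent vectors determine distinct points.

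The construction is now immediate. Let $\ell_0,\ldots,\ell_n\in V_1=W^{\vee}$ be the dual basis determined by $\ell_i(v_j)=\delta_{ij}$, and put $\ell=\ell_0+\cdots+\ell_n$. Then $\ell(v_i)=1\neq 0$ for every $i$, so $H=V(\ell)$ is a hyperplane meeting none of the $P_i$, whence $\Gamma\subseteq\Proj^n_k\setminus H=\A^n_k$, as required.

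The one point I would flag is the temptation to argue by genericity --- that finitely many hyperplanes in the dual space $\Proj(W^{\vee})$ cannot cover it --- which is simply false over a finite field and would leave the lemma unproved in that case. The dual-basis sum is precisely what makes the argument uniform in $k$: it manufactures a form taking the \emph{same} nonzero value on all $n+1$ basis vectors simultaneously, a property that no counting or dimension argument can guarantee.
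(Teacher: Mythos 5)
Your proof is correct, and it takes a genuinely different route from the paper's. You convert non-degeneracy of the $n+1$ rational points into the statement that coordinate vectors $v_0,\ldots,v_n$ form a basis of $W=k^{n+1}$ (correct: failing to span is exactly lying on a common hyperplane, and $n+1$ vectors in dimension $n+1$ span iff they are a basis), and then produce the explicit linear form $\ell=\ell_0+\cdots+\ell_n$ from the dual basis, with $\ell(v_i)=1$ for all $i$; equivalently, after a projective change of coordinates moving $\Gamma$ to the vertices of the coordinate simplex, $H$ is the hyperplane where the coordinate sum vanishes. The paper instead argues by induction on $n$: it places $P_1,\ldots,P_n$ in a hyperplane $H\cong\Proj^{n-1}_k$ (which cannot contain $Q$, and in which $\Gamma'=\{P_1,\ldots,P_n\}$ is again non-degenerate), obtains by induction a hyperplane $H'$ of $H$ avoiding $\Gamma'$, and then selects a hyperplane of $\Proj^n_k$ of the form $H'+R$ missing $Q$; the paper's ``dimension count'' amounts to the pencil of hyperplanes through $H'$ being a $\Proj^1(k)$ with at least three members, of which at most two are bad ($H$ itself and the unique member through $Q$), so the paper's argument is in fact also uniform over finite fields. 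Your approach buys an explicit, one-step hyperplane and makes field-independence completely manifest; the paper's buys a coordinate-free geometric induction at the cost of a slightly delicate final step. Two small remarks: your aside that distinctness ``is automatic'' is harmless but inverted as stated --- distinctness is a hypothesis of the lemma, which your reformulation shows to be redundant given non-degeneracy, since independent vectors give pairwise distinct points; and your closing warning against genericity arguments, while apt in spirit (compare the paper's remark that over $\mathbb{F}_2$ three collinear points in the plane meet every line), does not actually indict the paper's proof, whose count runs over a single pencil rather than over all hyperplanes.
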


\begin{proof} We will do induction on $n$. The situation with $n=1$ is clear. Let $\Gamma =\{P_1, \ldots, P_n,Q\}$ be $n+1$ points in $\Proj^{n}_k$, and let $H$ be a hyperplane containing $\Gamma'=\{P_1, \ldots, P_n\}$. The non-degeneracy assumption on  $\Gamma $ implies that $Q$ is not in $H$ and that $\Gamma'$, considered as points in $H=\Proj^{n-1}_k$, are non-degenerate. By induction hypothesis there exists a hyperplane $H'\subset \Proj^{n-1}_k$ avoiding the points $\Gamma'$.  The  hyperplane $H'$ together with any point in $R\in \Proj^n_k\setminus \Proj^{n-1}_k$ determines a hyperplane $H'+R$. A dimension count shows that not all hyperplanes $H'+R$ can contain $Q$.
\end{proof}



\begin{rem} The non-degeneracy assumption is necessary. In the projective plane over the field with two elements, any three points lying on a line intersect all the seven lines in the plane (since any two lines intersect and there are only three points on a line). 
\end{rem}

\subsection{Cover II}\label{cover2} Given a set $\Gamma\subseteq\Proj^n_k$ of $n+1$ distinct $k$-rational points, we have, by Lemma~\ref{hyperplane}, that $\Gamma$ is contained in the complement of a hyperplane~$H$. We can choose a finite set $\calB$ of hyperplanes in $\Proj^n_k$ such that any $\Gamma$ is in the complement of one of these hyperplanes. Indeed, this is clear if the field is infinite, and if the field $k$ is finite, then there are only finitely many hyperplanes so the statement is trivial. 

For each linear form $z$ determining a hyperplane in $\calB$  we obtain the polynomial ring $A[t_1, \ldots, t_n]$ after localizing $A[x_0, \ldots, x_n]$ in $z$ and taking out degree zero. In each of these polynomial rings we consider the sequence
$$ \beta_z :=\{1, t_1, \ldots, t_n\}.$$
We define the scheme
\begin{equation}\label{big U}
 \operatorname{Gen}:=\operatorname{Gen}^+(2,n+1) =\cup_{z\in \calB}\operatorname{Hilb}^{\beta_z} \subseteq \mathbb{G}^{n+1}(V_2),
\end{equation}
where $\operatorname{Hilb}^{\beta_z}$ is the scheme of Theorem \ref{beta scheme}.

\begin{prop}\label{prop:nets of quadrics} The functor parametrizing closed subschemes $\Proj^n_A$ that are flat, finite of relative rank $n+1$, and non-degenerate, is represented by the scheme $\operatorname{Gen}$ \eqref{big U}. The universal family is the closed subscheme in $\Proj^n_A\times_A\operatorname{Gen}$ determined by the ideal $(J_2)\subseteq {\calO}[x_0, \ldots, x_n]$ coming from the universal sequence on the Grassmannian $\mathbb{G}^{n+1}(V_2)$ \eqref{univ G}, where $\calO$ is the structure sheaf on $\operatorname{Gen}$.
\end{prop}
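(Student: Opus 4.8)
The plan is to realize $\operatorname{Gen}$ as a gluing of the charts $\operatorname{Hilb}^{\beta_z}$, each identified with an open subfunctor of the functor $F$ of flat, finite, relative rank $n+1$, non-degenerate families in $\Proj^n_A$. First I would fix $z\in\calB$ and, via the open immersion $\A^n_A=\Proj^n_A\setminus H_z\hookrightarrow\Proj^n_A$, compare the two notions of non-degeneracy for a family $\Gamma$ that avoids $H_z$: such a $\Gamma$ lies in a projective hyperplane $\{c_0 z+\sum_i c_i x_i=0\}$ with $(c_0,\dots,c_n)\neq 0$ if and only if some $c_i$ with $i\geq 1$ is nonzero (the case of $c_0$ alone being ruled out since $z$ is invertible on $\Gamma$), and then $\Gamma$ lies in the affine hyperplane $\{c_0+\sum_i c_i t_i=0\}$ after dividing by $z$. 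Hence $\Gamma$ is non-degenerate in $\Proj^n_A$ exactly when it is non-degenerate in $\A^n_A$, and Proposition \ref{affine quadrics} shows that $\operatorname{Hilb}^{\beta_z}$ represents the subfunctor $F_z\subseteq F$ of non-degenerate families $\Gamma$ with $\Gamma\cap H_z=\emptyset$.

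Next I would check that each $F_z$ is an \emph{open} subfunctor of $F$. Given a family $\Gamma_T\subseteq\Proj^n_T$ classified by a map $T\to F$, the intersection $\Gamma_T\cap(H_z\times_A T)$ is a closed subscheme of $\Gamma_T$ that is finite over $T$; its image $W\subseteq T$ is therefore closed, and $T\setminus W$ is precisely the locus over which $\Gamma_T$ avoids $H_z$. Thus $F_z\times_F T=T\setminus W$ is open in $T$, so $F_z\hookrightarrow F$ is an open immersion of functors, and on overlaps $F_z\times_F F_w$ is represented by the open locus in $\operatorname{Hilb}^{\beta_z}$ where the family in addition avoids $H_w$.

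The heart of the argument is to show that the open subfunctors $\{F_z\}_{z\in\calB}$ cover $F$. Since each $F_z$ is open, it suffices to test the covering on points of the base, and after passing to an algebraically closed residue field this reduces to the geometric statement that a non-degenerate length $n+1$ subscheme $\Gamma\subseteq\Proj^n_\Omega$ avoids some hyperplane in $\calB$. As $\Gamma\subseteq\A^n_\Omega$ precisely when $z$ is invertible on $\Gamma$, which holds as soon as $z$ does not vanish at the (at most $n+1$) support points of $\Gamma$, the question only concerns the support. For a reduced configuration of $n+1$ distinct points this is exactly Lemma \ref{hyperplane}, with the finiteness of $\calB$ arranged in Section \ref{cover2}; a configuration with fewer support points is enlarged over the infinite field $\Omega$ to a non-degenerate set of $n+1$ distinct points, and any $z\in\calB$ avoiding the enlargement avoids the original support as well, so the same finite $\calB$ suffices. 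I expect this step to be the main obstacle: Lemma \ref{hyperplane} is stated only for distinct rational points, and it must be promoted to arbitrary, possibly non-reduced, non-degenerate length $n+1$ subschemes defined over extension fields, using only the fixed finite set $\calB$ of $k$-rational hyperplanes.

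Finally, the covering of $F$ by open representable subfunctors $F_z\cong\operatorname{Hilb}^{\beta_z}$ yields, by the standard gluing lemma for functors, that $F$ is represented by the scheme glued from the charts $\operatorname{Hilb}^{\beta_z}$ along the overlaps above. Because each chart is a locally closed subscheme of $\mathbb{G}^{n+1}(V_2)$ by Theorem \ref{beta scheme}, and the classifying map $\Gamma\mapsto\bigl(V_2\to H^0(\Gamma,\calO(2))\bigr)$ into the Grassmannian is intrinsic to $\Gamma$ and hence independent of $z$, the gluing is compatible with these embeddings and the resulting scheme is exactly the union $\operatorname{Gen}=\cup_{z}\operatorname{Hilb}^{\beta_z}$ of \eqref{big U}. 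For the universal family I would glue: over each chart Theorem \ref{beta scheme} produces the universal subscheme of $\Proj^n\times\operatorname{Hilb}^{\beta_z}$ cut out by the ideal $(J_2)$ coming from \eqref{univ G}, and since $(J_2)$ is defined globally on $\mathbb{G}^{n+1}(V_2)$ these agree on overlaps and assemble to the universal family on $\operatorname{Gen}$ determined by $(J_2)$.
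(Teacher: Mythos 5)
Your proposal retraces the paper's proof: the paper likewise identifies each chart via Proposition \ref{affine quadrics} (``$\operatorname{Hilb}^{\beta_z}$ is the scheme representing non-degenerate families not having support in the hyperplane $H$''), reduces the covering question to arbitrary field-valued points, and then reduces to the case of $n+1$ distinct rational points, which is handled by Lemma \ref{hyperplane} together with the construction of $\calB$ in Section \ref{cover2}. You actually supply several details the paper leaves implicit and which are correct and worth having: the comparison of affine and projective non-degeneracy on a hyperplane complement, the verification that each $F_z$ is an open subfunctor (the image of $\Gamma_T\cap(H_z\times_A T)$ in $T$ is closed because $\Gamma_T\to T$ is finite), and the gluing of the charts inside $\mathbb{G}^{n+1}(V_2)$ using that the classifying map $\Gamma\mapsto(V_2\to H^0(\calO_\Gamma(2)))$ is intrinsic.

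The step you explicitly leave open is, however, a genuine gap, and it sits exactly where the paper's proof is also at its thinnest: the paper dispatches it with the single sentence that one ``may assume $\Gamma$ is arranged in the most obtrusive case possible,'' namely $n+1$ distinct $k$-rational points. Your version of the reduction has two unproved ingredients. First, your enlargement claim is not automatic: an arbitrary set of $r\leq n$ points need not extend to a non-degenerate set of $n+1$ distinct points (three collinear points in $\Proj^3$ cannot); what saves the argument is that non-degeneracy of $\Gamma$ forces each length-$m_i$ local component to span an $(m_i-1)$-plane, with these spans in general position, and one completes the support inside those spans --- this uses the scheme structure, not just the support. Second, and more seriously, Lemma \ref{hyperplane} and the construction of $\calB$ concern $k$-rational points and $k$-rational hyperplanes, whereas after passing to $\Omega=\overline{\kappa(t)}$ your $n+1$ points are only $\Omega$-rational. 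For infinite $k$ this is repaired cheaply by choosing $\calB$ differently: take $n(n+1)+1$ hyperplanes in linearly general position (any $n+1$ with empty common intersection, a rank condition stable under field extension); then every geometric point over any extension lies on at most $n$ members of $\calB$, and pigeonhole on the at most $n+1$ support points gives avoidance with no appeal to non-degeneracy at all. For finite $k$, where $\calB$ consists of all hyperplanes, rationality genuinely matters: a $k$-rational hyperplane passes through a geometric point $p$ exactly when it contains the $k$-linear span of $p$, so non-rational support points block fewer hyperplanes and one is tempted to replace each support point by a rational point of its span --- but the replaced configuration can fail to be non-degenerate, and reconciling this with Lemma \ref{hyperplane} is where the real work lies (small cases, e.g.\ over $\mathbb{F}_2$, show the bookkeeping is tight and the conclusion is rescued only by characteristic-specific linear algebra). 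So your proof is at parity with the paper's, with more functorial scaffolding; but the obstacle you flag is real, is not closed by citing Lemma \ref{hyperplane} as stated, and would need the additions above to make either write-up complete.
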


\begin{proof} 
Let $z\in \calB$ be a linear form, and let $H\subseteq \Proj^n_A$ denote the hyperplane determined by it. By Proposition \ref{affine quadrics} we have that $\operatorname{Hilb}^{\beta_z}$ is the scheme representing non-degenerate families not having support in the hyperplane $H$. It suffices to check that we have a covering over arbitrary field valued points. Let $\Gamma \subseteq \Proj^n_k$ be a non-degenerate closed subscheme of length $n+1$. It remains to show that there exists a hyperplane $H\in \calB$ that avoids $\Gamma$. We may therefore assume that $\Gamma$ is arranged in the most obtrusive case possible, that is, that $\Gamma$ consists of $n+1$ distinct and $k$-rational points. The result then follows from the construction of $\calB$.
\end{proof}

\section{Commutator relations and Fitting ideals}

We will in this section show a connection between commutator relations and Fitting ideals. 

\subsection{Non-commutative monomials} Let $\hat \beta$ be a sequence of monomials in the free, non-commutative, algebra $A\langle t_1, \ldots, t_n\rangle $ in the variables $t_1, \ldots, t_n$ over a commutative unital ring $A$. We assume that the cardinality ${|\hat \beta |=N}$, and that $1\in \hat \beta$. Let $E_{\hat \beta}$ denote the free $A$-module having as a basis the elements of $\hat \beta$. Assume furthermore that we have $(N\times N)$-matrices $T_1, \ldots ,T_n$, where for any $\m \in \hat \beta$ we have
\begin{equation}\label{action}
T_i(\m) =t_i\m \quad \text{if}\quad t_i\m \in \hat \beta.
\end{equation}
\begin{lemma}\label{matrices} The matrices $T_1 ,\ldots, T_n$ satisfying \eqref{action} determine an $A$-linear map
$$\psi \colon A\langle t_1, \ldots, t_n\rangle \longrightarrow E_{\hat \beta}$$
by sending monomials $t_{n_1}^{a_1}\cdots t_{n_k}^{a_k}$ to $T_{n_1}^{a_1}\cdots T_{n_k}^{a_k}(1)$. The map $\psi$ is surjective and its kernel is a left ideal. And conversely, a left ideal $I\subseteq A\langle t_1, \ldots, t_n\rangle$ where $\hat \beta$ form a basis for the quotient module is given by such matrices $T_1, \ldots, T_n$ satisfying \eqref{action}. 
\end{lemma}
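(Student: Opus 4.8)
The plan is to realise $\psi$ as a structure map coming from the universal property of the free algebra, and then to read off all four assertions from a single intertwining identity. First I would use that $A\langle t_1,\ldots,t_n\rangle$ is free on the symbols $t_1,\ldots,t_n$: the assignment $t_i\mapsto T_i$ extends uniquely to an $A$-algebra homomorphism $\rho\colon A\langle t_1,\ldots,t_n\rangle\to\End_A(E_{\hat\beta})$. I then define $\psi$ by evaluation at the basis vector $1\in\hat\beta$, that is $\psi(w)=\rho(w)(1)$. This is manifestly $A$-linear, sends the word $t_{n_1}^{a_1}\cdots t_{n_k}^{a_k}$ to $T_{n_1}^{a_1}\cdots T_{n_k}^{a_k}(1)$ as required, and satisfies the key identity
\[
\psi(vw)=\rho(v)\,\psi(w)\qquad\text{for all }v,w,
\]
in particular $\psi(t_iw)=T_i\,\psi(w)$ and $\psi(1)=1$.

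The claim that $\ker\psi$ is a left ideal is then immediate: if $\psi(u)=0$ then $\psi(vu)=\rho(v)\psi(u)=0$ for every $v$, so $\ker\psi$ is stable under left multiplication. For surjectivity I would note that $\operatorname{Im}\psi$ is an $A$-submodule of $E_{\hat\beta}$ containing $\psi(1)=1$ and stable under each $T_i$, since $T_i(\psi(w))=\psi(t_iw)$. It then suffices to prove that every basis monomial $\mu\in\hat\beta$ lies in $\operatorname{Im}\psi$, which I would do by induction on the length of $\mu$ as a word: for $\mu\neq 1$ write $\mu=t_i\mu'$ by stripping the leftmost variable, so that by the divisor-closure property of $\hat\beta$ (the non-commutative analogue of \eqref{monomial crit}) we have $\mu'\in\hat\beta$; by induction $\mu'\in\operatorname{Im}\psi$, and then \eqref{action} gives $T_i(\mu')=t_i\mu'=\mu$, whence $\mu\in\operatorname{Im}\psi$. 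This induction is the step I expect to be the main obstacle: the only way a basis monomial enters the image is as $T_i$ applied to a shorter basis monomial, so without the input that each $\mu$ factors as $t_i$ times a \emph{shorter element of $\hat\beta$} the cyclic submodule generated by $1$ need not fill $E_{\hat\beta}$, and surjectivity genuinely relies on the order-ideal-type structure of $\hat\beta$.

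For the converse, given a left ideal $I\subseteq A\langle t_1,\ldots,t_n\rangle$ whose quotient $Q=A\langle t_1,\ldots,t_n\rangle/I$ has $\hat\beta$ as an $A$-basis, I would equip $Q$ with its natural left-module structure and let $T_i$ be the matrix, in the basis $\hat\beta$, of the $A$-linear operator ``left multiplication by $t_i$'' on $Q$ (well defined since $I$ is a left ideal). For $\mu\in\hat\beta$ one computes $T_i(\mu)=t_i\mu$ in $Q$; when the word $t_i\mu$ itself lies in $\hat\beta$ its class in $Q$ is the corresponding basis vector, so $T_i(\mu)=t_i\mu$ in $E_{\hat\beta}$, which is exactly \eqref{action}. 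Finally I would observe that these constructions match up: the map $\psi$ built from these multiplication matrices is nothing but the quotient map $A\langle t_1,\ldots,t_n\rangle\to Q$ under the identification $Q\cong E_{\hat\beta}$, so $\ker\psi=I$, closing the correspondence.
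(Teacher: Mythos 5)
Your proof is correct and follows essentially the same route as the paper's: both realize $\psi$ as the composite $\operatorname{ev}\circ T$ of the algebra homomorphism $T\colon A\langle t_1,\ldots,t_n\rangle\to\operatorname{End}_A(E_{\hat\beta})$ with evaluation at $1$, deduce that the kernel is a left ideal from the intertwining identity, and in the converse take the $T_i$ to be the left-multiplication operators on the quotient, identifying $\psi$ with the quotient map. The only cosmetic difference is in surjectivity: you argue by induction on word length using the divisor-closure of $\hat\beta$, whereas the paper packages the same input into the observation that \eqref{action} forces $\operatorname{ev}\circ T\circ i_{\hat\beta}$ to be the identity, exhibiting an explicit section of $\psi$ --- and your remark that surjectivity genuinely depends on this order-ideal-type structure of $\hat\beta$ (beyond the bare assumption $1\in\hat\beta$) is an accurate reading of what the paper uses implicitly.
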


\begin{proof} Let $E=E_{\hat \beta}$. The map $\psi$ is the composition of the evaluation map $\operatorname{ev}\colon \operatorname{End}_A(E) \ra E$ at the element $1\in E$, and the $A$-algebra homomorphism  $T\colon A\langle t_1, \ldots, t_n\rangle \ra \operatorname{End}_A(E)$ determined by the matrices $T_1, \ldots, T_n$. The kernel of the composite map
\[\xymatrix{ A\langle t_1, \ldots, t_n\rangle \ar[r]^-T & \operatorname{End}_A(E) \ar[r]^-{\operatorname{ev}} & E}\]
is a left ideal. 

Conversely, assume $I$ is a left ideal such that $\hat \beta$ form a basis for the quotient module $A\langle t_1, \ldots, t_n\rangle/I =E$. The multiplication action of the free algebra on the quotient $E$ determines the matrices $T_1, \ldots, T_n$ satisfying \eqref{action}. The elements in $\hat \beta$ determine an $A$-module homomorphism ${i_{\hat \beta}\colon E \ra A\langle t_1, \ldots, t_n\rangle}$. The conditions \eqref{action} assure that the composition $\operatorname{ev}\circ T \circ i_{\hat \beta}$ is the identity. 
\end{proof}

\subsection{Commutators} By above, we have that a left ideal $I\subseteq A\langle t_1, \ldots , t_n\rangle $ and monomials $\hat \beta$ that form a basis for the quotient module are equivalent with having matrices $T_1, \ldots, T_n$ satisfying \eqref{action}. In particular such an ideal $I$ and such a sequence $\hat \beta$  determine the commutator ideal in $A$, which is the ideal generated by the entries of the matrices
\begin{equation}\label{comm} [T_i,T_j]=T_iT_j-T_jT_i \quad 1\leq i,j\leq n.
\end{equation}

\subsection{Monomial lift} A lift of a monomial $\m$ in $A[t_1, \ldots, t_n]$ means a monomial word $\hat \m$ in $A\langle t_1, \ldots, t_n\rangle$ such that $\operatorname{c}(\hat \m)=\m$, where 
\begin{equation}\label{canonical} \operatorname{c}\colon A\langle t_1, \ldots, t_n\rangle \ra A[t_1, \ldots, t_n]
\end{equation}
denotes the canonical quotient map to the polynomial ring.
If $\beta$ is a sequence of monomials, then $\hat \beta$ denotes a lifted sequence of monomial words in $A\langle t_1, \ldots, t_n\rangle$. 

\begin{rem}\label{rem:kommutes} Let $\hat \beta$ be a lift of $\beta$, and  assume that we have matrices $T_1, \ldots, T_n$ satisfying \eqref{action}. Then the commutator ideal \eqref{comm} is zero precisely when the matrices commute and the corresponding left ideal $I\subseteq A\langle t_1, \ldots, t_n\rangle$ is such that the canonical map
\[ A\langle t_1, \ldots, t_n\rangle/I \ra A[t_1, \ldots, t_n]/\operatorname{c}(I)\]
is an isomorphism. 
\end{rem}

\subsection{Homogenization} Let $z$ and $x_1, \ldots, x_n$ be variables over $A$. Let $A[z]\langle x_1, \ldots, x_n\rangle$ denote the free $A[z]$-algebra in $x_1, \ldots, x_n$. Note that the variable $z$ commutes with $x_i$ for all $i=1, \ldots, n$. We consider the ring $A[z]\langle x_1, \ldots, x_n\rangle$ as graded where 
$$ \deg(z)=\deg(x_1) =\cdots =\deg(x_n)=1.$$
We can localize $A[z]\langle x_1, \ldots, x_n\rangle$ as an $A[z]$-module. When we localize with respect to $z$ and take out the degree zero part, we obtain the free algebra $A\langle t_1, \ldots, t_n\rangle$ where $t_i=x_i/z$ for $i=1, \ldots, n$.  

\begin{prop}\label{noncommutative} Let $\beta$ be a sequence of monomials in $A[t_1, \ldots, t_n]$ satisfying \eqref{monomial crit}, fix $d\ge d(\beta)+1$, and let $\hat \beta$ be a lift of $\beta$. Let $J'\subseteq A[z]\langle x_1, \ldots, x_n\rangle$ be a graded left ideal, let $J\subseteq A[z, x_1, \ldots, x_n]$ denote its image under the canonical map \eqref{canonical}, and  let $I\subseteq A\langle t_1, \ldots, t_n\rangle$ denote the left ideal we obtain from $J'$ by localization in $z$ and taking out degree zero elements.  We assume the following. 
\begin{enumerate}
\item The graded ideal $J\subseteq A[z,x_1, \ldots, x_n]$ is generated by elements of degree $d$.
\item The sequence $\xbeta{d}$ forms a basis for the $A$-module 
\[\Q{J}_d=(A[z, x_1, \ldots, x_n]/J)_d.\]
\item The sequence $\hat \beta$ forms a basis for the $A$-module $A\langle t_1, \ldots, t_n\rangle/I$. 
\end{enumerate} 
 Then we have the equality of ideals
\[ \mathcal{F}itt_{N-1}(\Q{J}_{d+m})=([T_i,T_j])_{1\leq i,j\leq n} \quad \text{in}\quad A,\]
for all $m\geq 1$, and where $N=|\beta|$.
\end{prop}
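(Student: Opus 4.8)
The plan is to reduce to the case $m=1$ and then prove the two inclusions between $\mathcal{F}itt_{N-1}(\Q{J}_{d+1})$ and the commutator ideal $([T_i,T_j])$ separately. Since hypotheses (1) and (2) are exactly the assumptions of Proposition \ref{Gustav}, that result gives $\mathcal{F}itt_{N-1}(\Q{J}_{d+m})=\mathcal{F}itt_{N-1}(\Q{J}_{d+1})$ for every $m\geq 1$, so it suffices to treat $m=1$. By hypothesis (3) and Lemma \ref{matrices}, the left ideal $I$ produces matrices $T_1,\dots,T_n$ satisfying \eqref{action}, and these are the matrices whose commutators generate the ideal in question. The backbone of the argument is a bridging identity: for every $\mu\in\beta$ and every $i$ one has, in $\Q{J}_{d+1}$,
\[ x_i\cdot\overline{z^d\mu}=\sum_{\lambda\in\beta}(T_i)_{\lambda\mu}\,\overline{z^{d+1}\lambda}, \]
where $\overline{z^d\mu}$ runs through the basis $\xbeta{d}$ of $\Q{J}_d$ and where we set $g_\lambda:=\overline{z^{d+1}\lambda}$ for the generators $\xbeta{d+1}$. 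I would establish this identity by lifting the defining relation $t_i\hat\mu=\sum_\lambda(T_i)_{\lambda\mu}\hat\lambda$ of $A\langle t_1,\dots,t_n\rangle/I$ to $A[z]\langle x_1,\dots,x_n\rangle/J'$ through homogenization, and then pushing it forward along the commutativization map $\operatorname{c}$ of \eqref{canonical}, using $J=\operatorname{c}(J')$ and hypothesis (1) to place the identity in degree $d+1$.

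Granting the bridging identity, the inclusion $([T_i,T_j])\subseteq\mathcal{F}itt_{N-1}(\Q{J}_{d+1})$ follows by reducing a single commutative monomial in two ways. Because $\xbeta{d+1}$ generates $\Q{J}_{d+1}$, the $g_\lambda$ present it as a quotient $A^N\to\Q{J}_{d+1}\to 0$, and since the module is generated by $N$ elements, $\mathcal{F}itt_{N-1}(\Q{J}_{d+1})$ is precisely the ideal of entries of a relation matrix; in particular the entries of \emph{any} relation among the $g_\lambda$ lie in this Fitting ideal. For $\mu\in\beta$ and $1\leq i,j\leq n$ the degree $d+1$ monomial $x_i x_j z^{d-1}\mu$ (a genuine monomial, as $d\geq d(\beta)+1$) is reduced to the generators by applying the bridging identity in the two possible orders. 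The commutativity of the polynomial ring forces the two outcomes to agree in $\Q{J}_{d+1}$, and subtracting them yields the relation $\sum_\nu[T_i,T_j]_{\nu\mu}\,g_\nu=0$. Hence the columns of the commutator matrices are relations among the $g_\nu$, and their entries lie in $\mathcal{F}itt_{N-1}(\Q{J}_{d+1})$, proving the inclusion.

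For the reverse inclusion I would argue by base change to $\bar A=A/([T_i,T_j])$. Over $\bar A$ the matrices commute, so by Remark \ref{rem:kommutes} the canonical map $\bar A\langle t_1,\dots,t_n\rangle/I\to\bar A[t_1,\dots,t_n]/\operatorname{c}(I)$ is an isomorphism and $\beta$ is a basis for the commutative quotient, which is therefore finite and free of rank $N$ over $\bar A$. Hypotheses (1) and (2) base change to $\bar A$ and identify $J\otimes_A\bar A$ with the universal ideal of the chart $\Gr{\xbeta{d}}$, so this finite subscheme of $\A^n_{\bar A}$ is a $\bar A$-point of the scheme $\Hbeta$ of Theorem \ref{beta scheme}; in particular it lies in the Fitting stratum, that is $\mathcal{F}itt_{N-1}(\Q{J}_{d+1}\otimes_A\bar A)=0$. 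As Fitting ideals commute with base change, this says $\mathcal{F}itt_{N-1}(\Q{J}_{d+1})\cdot\bar A=0$, that is $\mathcal{F}itt_{N-1}(\Q{J}_{d+1})\subseteq([T_i,T_j])$. Combined with the first inclusion and the reduction above, this gives the claimed equality for all $m\geq 1$.

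The main obstacle I anticipate is the bridging identity and, more precisely, the bookkeeping needed to match the three ideals $J'$, $J=\operatorname{c}(J')$ and $I$ across homogenization, dehomogenization and commutativization. One must verify that the degree $d+1$ part of $J$ base changes correctly and is genuinely the homogenization of $\operatorname{c}(I)$ in that degree, so that both the forward reduction and the identification of $J\otimes_A\bar A$ with the universal ideal of Theorem \ref{beta scheme} are legitimate. Hypotheses (1) and (2), together with the fact that $z$ commutes with every $x_i$, are exactly what make this compatibility work, but checking it carefully is the delicate step.
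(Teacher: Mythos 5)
Your global architecture is sound and your second inclusion is essentially the paper's argument in disguise: the paper also proves $\mathcal{F}itt_{N-1}(\Q{J}_{d+1})\subseteq([T_i,T_j])$ by passing to $A/([T_i,T_j])$, where Remark~\ref{rem:kommutes} makes the noncommutative and commutative quotients agree, and then concludes vanishing of the Fitting ideal (the paper does this directly via Propositions~\ref{gen} and~\ref{Gustav} rather than by quoting Theorem~\ref{beta scheme}, but the content is the same, including the verification that $J$ becomes the homogenization of $\operatorname{c}(I)$ once the commutators die). The reduction to $m=1$ via Proposition~\ref{Gustav} is also fine. The problem is your first inclusion, $([T_i,T_j])\subseteq\mathcal{F}itt_{N-1}(\Q{J}_{d+1})$, which rests entirely on the ``bridging identity,'' and the justification you sketch for it does not go through. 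The relation $t_i\hat\mu-\sum_\lambda(T_i)_{\lambda\mu}\hat\lambda\in I$ only tells you that \emph{some} $z$-power multiple of its homogenization lies in $J'$, because $I$ is the degree-zero part of the localization $(J')_z$; pushing forward along $\operatorname{c}$ therefore yields $z^{e}\cdot\bigl(x_i\,z^{d}\mu-\sum_\lambda(T_i)_{\lambda\mu}z^{d+1}\lambda\bigr)\in J$ for some unknown $e\geq 0$, not the membership in $J_{d+1}$ that you need. You cannot cancel the factor $z^{e}$: multiplication by $z$ on $\Q{J}$ in degrees $\geq d$ need not be injective before the Fitting stratum is imposed --- indeed $\mathcal{F}itt_{N-1}(\Q{J}_{d+1})$ is exactly the obstruction to $\xbeta{d+1}$ being a basis, so assuming this cancellation comes close to assuming what you are proving.

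The failure mode can be made precise. An element of $J'_d$ dehomogenizes to some $u\in I$ with $\operatorname{c}(u)=t_i\mu-\sum_\lambda c_\lambda\lambda$, and writing $u=\bigl(t_i\hat\mu-\sum_\lambda c_\lambda\hat\lambda\bigr)+v$ with $v\in\ker(\operatorname{c})$ and applying the map $\psi$ of Lemma~\ref{matrices}, one finds $c_\lambda=(T_i)_{\lambda\mu}+\psi(v)_\lambda$. Since $\ker(\operatorname{c})$ is the two-sided ideal generated by the $[t_k,t_l]$, the entries of the error vector $\psi(v)$ lie precisely in the commutator ideal $([T_i,T_j])$ --- the very ideal you are trying to bound. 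So hypotheses (1)--(3) pin down the reduction coefficients only modulo $([T_i,T_j])$, your two-way reduction then only produces relations whose entries agree with the commutator entries modulo that same ideal, and the argument delivers the vacuous inclusion $([T_i,T_j])\subseteq\mathcal{F}itt_{N-1}(\Q{J}_{d+1})+([T_i,T_j])$. (Your bridging identity does hold on the nose for the generic chart, as in Example~\ref{smallFitt}, but the proposition is asserted for an arbitrary $J'$ satisfying (1)--(3).) The paper avoids this circularity by proving the contrapositive after base change: modulo $\mathcal{F}itt_{N-1}(\Q{J}_{d+1})$, Proposition~\ref{Gustav} upgrades generation to \emph{freeness} of $\Q{J}_{d+m}$ on $\xbeta{d+m}$ for all $m\geq 1$, so localizing in $z$ exhibits $A[t_1,\ldots,t_n]/I'$ as free on $\beta$ (the transition maps send basis to basis, which is exactly the $z$-saturation you lack); the basis-to-basis surjection $A\langle t_1,\ldots,t_n\rangle/I\to A[t_1,\ldots,t_n]/I'$ is then an isomorphism, and Remark~\ref{rem:kommutes} kills the commutators. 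You should replace your bridging argument with this mechanism, or else prove $z$-torsion-freeness of $\Q{J}$ in degrees $\geq d$ from (1)--(3), which is precisely the delicate point you flagged but did not resolve.
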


\begin{proof} Both the Fitting ideals and commutator ideals commute with base change $A\ra A'$. We also have that the ideals arising from $J'\otimes_AA'$ will also satisfy (1), (2) and (3). Thus to show the proposition we can pass to appropriate quotient rings of $A$. 

We first assume that $[T_i, T_j]=0$ for $1\leq i, j\leq n$. We then have that the left ideal $I\subseteq A\langle t_1, \ldots, t_n\rangle$ is such that
\begin{equation}\label{affine quot} A\langle t_1,\ldots, t_n\rangle /I=A[t_1, \ldots, t_n]/\operatorname{c}(I).
\end{equation}
It follows that the homogeneous left ideal $J'\subseteq A[z]\langle x_1, \ldots, x_n\rangle$ also contains all commutators, that is
\begin{equation}\label{proj quot}
 A[z]\langle x_1, \ldots, x_n\rangle/J'=A[z, x_1, \ldots, x_n]/J.
\end{equation}
One verifies that $J$ is the homogenization of $\operatorname{c}(I)$. The ideal $\operatorname{c}(I)$ determines a closed, finite, subscheme $\Gamma \subseteq \A^n_A$, and the homogeneous ideal $J$ determines $\Gamma$ as a closed subscheme in $\Proj^n_A$. By assumption $\beta$ is an $A$-module basis of the coordinate ring of $\Gamma$, and then we also have that $\xbeta{m}$ is an $A$-module basis for the global sections of $\Gamma \subseteq \Proj^n_A$, for $m\gg 0$, which is the degree $m$ part of the quotient ring \eqref{proj quot}. By the defining properties of Fitting ideals, we then have that  $\mathcal{F}itt_{N-1}(\Q{J}_m)=0$ and that $\mathcal{F}itt_N(\Q{J}_m)=A$ for $m\gg 0$. By Proposition~\ref{gen} we have that $\Gamma$ is generated in degree~$d$, and by assumption $\xbeta{d}$ is a basis for the degree $d$ part of the quotient ring \eqref{proj quot}. It then follows by Proposition~\ref{Gustav} that the Fitting ideals $\mathcal{F}_{N-1}(\Q{J}_{d+m})=0$ for $m\geq 1$. 
We have therefore proved that the Fitting ideals in question are included in the commutator ideal generated by  the entries of the matrices $[T_i,T_j]$ (for $1\leq i, j\leq n$).

To prove the converse, assume that $\mathcal{F}itt_{N-1}(\Q{J}_{d+1})=0$. Proposition \ref{Gustav} gives that $\mathcal{F}itt_{N-1}(\Q{J}_{d+m})=0$, and that $\xbeta{d+m}$ is a basis for the degree~$m$ part of the graded quotient \eqref{proj quot}, for $m\geq 1$. We need to see that the commutators are zero. The closed subscheme $\Gamma \subseteq \Proj^n_A$ determined by the homogeneous ideal $J\subseteq A[z, x_1, \ldots, x_n]$ is then affine, and of relative rank $N=|\beta|$. We then get that $\beta$ is a basis for the coordinate ring of $\Gamma$, which we obtain by localizing \eqref{proj quot} in $z$ and taking out the degree zero part. Let $I'$ be the ideal in $A[t_1, \ldots, t_n]$ corresponding to the closed immersion $\Gamma \subseteq \A^n_A$. As a consequence of exactness of localizations we get a surjection
$$ A\langle t_1, \ldots, t_n\rangle /I \ra A[t_1, \ldots, t_n]/I'.$$
By assumption $\hat \beta$ is a basis for the leftmost module, and we have that $\beta$ is a basis for the module on the right. It follows that the rings are isomorphic. It then follows by Remark~\ref{rem:kommutes} that $[T_i,T_j]=0$ for all $1\leq i,j\leq n$.
\end{proof}

\begin{rem} In \cite{GFitting} and in \cite{ABM} two different descriptions of the embedding of the Hilbert scheme $\operatorname{Hilb}^N_{\Proj^n_A}$ into the Grassmannian $\mathbb{G}^N(V_d)$ were given. The first describes the embedding via Fitting ideals, and the second describes the embedding via commutator ideals. Our result above identifying the Fitting ideal with the commutator ideal connects these two different descriptions of the Hilbert scheme.
\end{rem}

\begin{ex} Let $J'\subseteq A[z]\langle x_1, x_2\rangle$ be the homogeneous left ideal generated by
\begin{align*}
F_1 &=x_1^2-a_1z^2-a_2zx_1-a_3zx_2,\quad & F_2 &=x_2^2-c_1z^2-c_2zx_1-c_3zx_2, \\
F_3 &=x_1x_2-b_1z^2-b_2zx_1-b_3zx_2,\quad &
F_4&=x_2x_1-b_1z^2-b_2zx_1-b_3zx_2,
\end{align*}
where $a_i, b_i, c_i$ ($i=1,2,3$) are elements in $A$. Note that under the canonical map $\operatorname{c}\colon A[z]\langle x_1, x_2\rangle \ra A[z,x_1,x_2]$ we have that $\operatorname{c}(F_3)=\operatorname{c}(F_4)$. Let ${J\subseteq A[z,x_1, x_2]}$ denote the image of $J'$ under the map $\operatorname{c}$. The homogeneous ideal $J=\bigl(\operatorname{c}(F_1),\operatorname{c}(F_2),\operatorname{c}(F_3)\bigr)$ is such that the degree 2 component of the quotient ring $\Q{J}=A[z,x_1,x_2]/J$ is free with basis $\{z^2,zx_1,zx_2\}$. 

On the other hand, the left ideal $I\subseteq A\langle t_1, t_2\rangle$, obtained by localizing $J'$ in $z$ and taking out the degree zero part, is generated by
\begin{align*}
f_1 &=t_1^2-a_1-a_2t_1-a_3t_2,\quad & f_2 &=t_2^2-c_1-c_2t_1-c_3t_2, \\
f_3 &=t_1t_2-b_1-b_2t_1-b_3t_2,\quad &
f_4&=t_2t_1-b_1-b_2t_1-b_3t_2.
\end{align*}
The quotient $A\langle t_1,t_2\rangle /I=E$ is the free $A$-module with basis $ \{1, t_1, t_2\}$. We are therefore in a situation where  Proposition \ref{noncommutative} applies. We have $\beta =\{1, t_1, t_2\}=\hat \beta$, and use degree $d=2=d(\beta)+1$. The left ideal $J'=(F_1, F_2, F_3 ,F_4) \subset A[z]\langle x_1,x_2\rangle$ is such that the assumptions (1), (2) and (3) are satisfied.  Now we want to describe the commutator ideal and the Fitting ideal arising in this particular situation.

The matrices, corresponding to the action of $A\langle t_1, t_2\rangle$ on the quotient module $E$ are
$$ T_1=\begin{bmatrix}
0&a_1&b_1\\1&a_2&b_2\\0&a_3&b_3
\end{bmatrix}
\quad\text{and}\quad
T_2=\begin{bmatrix}
0&b_1&c_1\\0&b_2&c_2\\1&b_3&c_3
\end{bmatrix}. $$

The commutator ideal in $A$ is generated by the coefficients of  $T_1T_2-T_2T_1$, and that matrix is 
$$\begin{bmatrix}
0&a_1b_2-a_2b_1+b_1b_3-a_3c_1& a_1c_2-b_1b_2-b_3c_1+b_1c_3\\
0&b_2b_3-a_3c_2+b_1                 &a_2c_2-b_2^2-b_3c_2+b_2c_3+c_1\\
0&a_3b_2-a_2b_3+b_3^2-a_3c_3-a_1&a_3c_2-b_2b_3-b_1
\end{bmatrix}.$$

The commutator ideal is generated by five elements as the two lower diagonal elements are equal up to a sign. On the other hand we want to compute the 2nd Fitting ideals of the graded components of $\Q{J}=A[z,x_1,x_2]/J$, in degrees $\geq 3$. The Fitting ideal of interest is  $\Fitt_{2}(\Q{J}_3)$, the one arising from the degree 3 component. That ideal we computed in Example \ref{smallFitt}, and it is readily verified that those five generators listed  are, up to signs, the five generators of the commutator ideal.  In general the number of generators of the Fitting ideal that we obtain by taking all the prescribed minors,  will be far more than the number of generators of the commutator ideal. 
\end{ex}

\begin{rem} A technical comment. If one starts with a free $A$-module $E$ with basis $\{1, t_1, t_2\}$, and require $E$ to be a quotient module of $A\langle t_1, t_2\rangle$, one obtains the matrices
$$ T_1=\begin{bmatrix}
0&a_1&b_1\\1&a_2&b_2\\0&a_3&b_3
\end{bmatrix}
\quad\text{and}\quad
T_2=\begin{bmatrix}
0&b_1'&c_1\\0&b_2'&c_2\\1&b_3'&c_3
\end{bmatrix}. $$
The second column in $T_2$ is here not the same as the third column of $T_1$, as was the case in the above example. Backtracking gives now a graded left ideal $J'=(F_1, F_2, F_3,F_4)\subseteq A[z]\langle x_1, x_2\rangle$ generated by four elements where $F_1, F_2$ and $F_3$ are as above, but where 
$$ F_4=x_2x_1-b_1'z^2-b_2'zx_1-b_3'zx_2.$$
In general we will not have $\operatorname{c}(F_3)$ equal to $\operatorname{c}(F_4)$ via the canonical map $\operatorname{c}\colon A[z]\langle x_1, x_2\rangle \ra A[z,x_1,x_2]$. And then we will not have that the quotient module $E=A[z,x_1,x_2]/J=\bigl(\operatorname{c}(F_1),\operatorname{c}(F_2),\operatorname{c}(F_3),\operatorname{c}(F_3)\bigr)$ will be free in degree 2. In particular we see that the condition (3) of Proposition \ref{noncommutative} does not imply condition (2). 
\end{rem}

\section{Apolarity schemes}

\subsection{Cones} Let $A[z,x_1, \ldots, x_n]$ be the homogeneous coordinate ring of projective $n$-space $\Proj^n_A$ over $A$. Let $\Gamma \subseteq \Proj^n_A$ be a closed subscheme, and let $\calI$ denote its corresponding ideal sheaf. The affine cone is the closed subscheme $C_{\Gamma}\subseteq \A^{n+1}_A$ defined by the ideal
$$ \xymatrix{\bigoplus _{m\geq 0}H^0(\calI (m))\subseteq A[z, x_1, \ldots, x_n].}$$
\subsection{Apolarity}\label{apolarity} Let $Z\subseteq \A^{n+1}_A$ be a closed subscheme. Inspired by \cite{RanestadSchreyer} we say that a finite subscheme $\Gamma \subseteq \Proj^n_A$ is {\em apolar} to $Z$ if the affine cone of $\Gamma$ contains $Z$ as a subscheme, that is
$$ Z\subseteq C_{\Gamma} \quad \text{in}\quad  \A^{n+1}_A.$$
We let $\VPS{N}{Z}(A)$ denote the set of closed subschemes $\Gamma \subseteq \Proj^n_A$ that are finite, flat, and of relative rank $N$, and such that $\Gamma$ is apolar to $Z$. Then $\VPS{N}{Z}$ naturally becomes a subfunctor of the Hilbert functor ${\operatorname{Hilb}^N_{\Proj^n_A}}$ parametrizing closed subschemes in $\Proj^n_A$ that are flat, finite, and of relative rank $N$.

\begin{rem} We do not impose the reduced structure on $\VPS{N}{Z}$, a condition that is assumed in \cite{RanestadSchreyer}, \cite{MR3084557}. Note also that our definition of apolarity is slightly more general than the one given in \cite{RanestadSchreyer}.
\end{rem}

\begin{rem}
In \cite{RanestadSchreyer}, the authors define $\textrm{VPS}$ as the {\bf v}ariety of a{\bf p}olar {\bf s}chemes and $\textrm{VSP}$ as the {\bf v}ariety of {\bf s}ums of {\bf p}owers. In \cite{MR3084557} the notation appears to have changed with $\textrm{VPS}$ for the variety of sums of powers and $\textrm{VAPS}$ for the variety of apolar schemes. We use the notation from the first paper. 
\end{rem}


\begin{prop}\label{VPS} Let $I\subseteq A[z, x_1, \ldots, x_n]$ be an ideal generated by homogeneous elements, and let $Z\subseteq \A^{n+1}_A$ denote the corresponding, affine,  closed subscheme. Assume that $Z$ is finite and flat over $A$. Then $\VPS{N}{Z}$ is a closed subscheme of $\operatorname{Hilb^N_{\Proj^n_A}}$.
\end{prop}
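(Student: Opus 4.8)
The plan is to verify that $\VPS{N}{Z}$ is a \emph{closed subfunctor} of the Hilbert functor, using the standard criterion: for an arbitrary scheme $T$ and a flat family $\Gamma\subseteq\Proj^n_T$ of length $N$ (that is, a $T$-point of $\operatorname{Hilb}^N_{\Proj^n_A}$), I must exhibit a closed subscheme of $T$, compatible with base change, that cuts out the locus over which $\Gamma$ is apolar to $Z_T:=Z\times_A T$. The apolarity condition $Z_T\subseteq C_\Gamma$ translates into the containment of graded ideals $\calI_{C_\Gamma}\subseteq I\otimes_A\calO_T$, and since $C_\Gamma$ is defined by the saturated ideal this reads $H^0(\calI_\Gamma(m))\subseteq I_m\otimes\calO_T$ in every degree $m\ge 0$.

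First I would reduce to finitely many degrees. Since $I$ is homogeneous and $Z$ is finite and flat over $A$, the graded ring $A[z,x_1,\ldots,x_n]/I$ is invariant under scaling and zero-dimensional, hence supported at the origin, so it is a finite locally free $A$-module concentrated in degrees $0\le m\le D$ for some bound $D$; in particular $I_m=V_m$ for $m>D$, and the containment is automatic there. Thus apolarity is equivalent to the finitely many conditions $H^0(\calI_\Gamma(m))\subseteq I_m\otimes\calO_T$ for $0\le m\le D$. Flatness of $Z$ also makes each graded piece $W_m:=V_m/I_m$ locally free, so $I_m\hookrightarrow V_m$ is split and $I_m\otimes\calO_T$ is a subbundle of $V_m\otimes\calO_T$.

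For each such $m$ I would then work through base-change-friendly objects. As $\Gamma\to T$ is finite and flat, $\mathcal N_m:=\pi_*\calO_\Gamma(m)$ is locally free of rank $N$ and commutes with base change, and the evaluation map $\rho_m\colon V_m\otimes\calO_T\to\mathcal N_m$ has kernel $H^0(\calI_\Gamma(m))$. Writing $V_m=I_m\oplus C_m$ with $C_m\cong W_m$, one checks that $\ker\rho_m\subseteq I_m\otimes\calO_T$ is equivalent to the injectivity of $W_m\otimes\calO_T\to\mathcal N_m/\rho_m(I_m\otimes\calO_T)$, and dually, since $(\ker\rho_m)^\perp=\im(\rho_m^\star)$ and $(I_m)^\perp=W_m^\star$, to the containment of the \emph{fixed} subbundle $W_m^\star$ in the image of $\rho_m^\star\colon\mathcal N_m^\star\to V_m^\star\otimes\calO_T$. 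Phrased this way the condition is visibly of determinantal (Fitting) type in the entries of $\rho_m$, and I would assemble the closed subscheme of $T$ as the intersection of these loci over $0\le m\le D$.

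The main obstacle is the last point: the kernel $\ker\rho_m=\pi_*\calI_\Gamma(m)$ does \emph{not} in general commute with base change in the low degrees $m\le D$, precisely where $h^1(\calI_\Gamma(m))$ may jump, so a naive vanishing locus need not be functorial (indeed a clumsy formulation would produce the \emph{open} nondegeneracy locus in borderline cases). The content therefore lies in controlling the rank of $\rho_m$, equivalently the constancy of $h^0(\calI_\Gamma(m))$: one expects to invoke the regularity of length-$N$ subschemes so that, in the relevant range of $m$, $\rho_m$ is surjective of constant rank, $\ker\rho_m$ is a genuine subbundle commuting with base change, and the containment into $I_m$ becomes a bona fide closed condition representing the subfunctor. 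A cleaner route that sidesteps the low-degree pathology is to place $\operatorname{Hilb}^N_{\Proj^n_A}$ inside $\mathbb G^N(V_d)$ as the Fitting stratum $\Fitt_{N-1}(\Q{J}_{d+1})$ recalled earlier, where the universal degree-$d$ ideal is a tautological subbundle $J_d$; there the apolarity conditions can be read off directly as incidence conditions against the fixed subspaces $I_m$, manifestly closed.
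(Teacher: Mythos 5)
Your proposal sets up the right objects but stops exactly where the proof has to start: you never actually exhibit a closed subscheme representing the condition, and the two routes you sketch for doing so both fail or remain gestures. Concretely: (a) the dual reformulation is unsound over a general base --- the identity $(\ker\rho_m)^{\perp}=\im(\rho_m^{\star})$ holds fiberwise but not as sheaves, and ``a fixed subbundle is contained in the image of a bundle map'' is \emph{not} a closed condition (take $\rho=x\colon \calO\ra\calO$ on $\A^1_k$ and the section $1$; the locus is the open set $x\neq 0$), so nothing here is ``visibly of determinantal type''; (b) the rescue you propose, invoking regularity so that $\rho_m$ is ``surjective of constant rank in the relevant range,'' is unavailable precisely where it is needed: the nonvacuous apolarity conditions live in the low degrees $m\leq D$ where $I_m\subsetneq V_m$ (in the quadric application $D=2$ while $N=n+1$), and in those degrees $h^0(\calI_\Gamma(m))$ genuinely jumps on $\operatorname{Hilb}^N_{\Proj^n_A}$ (degenerate configurations), so $\ker\rho_m$ is never a subbundle on the whole Hilbert scheme --- your own warning about the ``open nondegeneracy locus'' applies to your own construction; (c) the ``cleaner route'' via the Fitting stratum in $\mathbb{G}^N(V_d)$ is left as an expectation: you do not say what the incidence conditions are, and recovering $H^0(\calI(m))$ for $m<d$ from the tautological $J_d$ reintroduces exactly the kernel-containment problem you are trying to avoid.

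The paper's proof is shorter and sidesteps all rank considerations. Apolarity is the vanishing of the maps $H^0(\calI(m))\ra E_m$ for all $m\geq 0$; flatness of $Z$ makes each graded piece $E_m$ locally free, and then hom--tensor adjunction converts each vanishing into the vanishing of the pairing $H^0(\calI(m))\otimes_A E_m^{*}\ra A$, whose image is an ideal cutting out a closed subscheme of the base; intersecting these over all $m$ gives the closed criterion. Note what is \emph{not} needed: no truncation to finitely many degrees, no dualization of $\mathcal{N}_m$, and no constancy of $h^0(\calI(m))$ --- the source of the map may be an arbitrary module, only the target $E_m$ must be locally free. Your base-change worry (extra sections of $\pi_*\calI_\Gamma(m)$ appearing after specialization) touches only the converse direction of the universal property, which the paper treats as immediate; the forward direction is automatic since images commute with base change. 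By contrast, your formulation makes the constant-rank issue load-bearing and then cannot discharge it, so as written the proposal does not prove the proposition.
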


\begin{proof} Let $\calI$ denote the ideal sheaf of a closed subscheme $\Gamma \subseteq \Proj^n_A$, where $\Gamma$ is flat and of finite rank $N$ over $A$. The condition of $\Gamma$ being apolar to $Z$  is that the ideal $\bigoplus_{m\geq 0}H^0(\calI(m))$ in $A[z, x_1, \ldots, x_n]$ is included in $I$. Such an inclusion is equivalent with the natural map 
\begin{equation}\label{graded map}
 \xymatrix{ \bigoplus_{m\geq 0}H^0(\calI(m)) \ar[r] & A[z, x_1, \ldots, x_n]/I=E }
\end{equation}
being the zero map. By assumption $I$ is graded, so the quotient module $E$ is also graded, $E=\bigoplus_{m\ge0} E_m$. Moreover,  the vanishing of \eqref{graded map} is equivalent with the vanishing of $
\xymatrix{ H^0(\calI(m)) \ar[r] & E_m }
$
in every degree $m\geq 0$. As $Z$ is assumed to be flat and finite, we have that $E$ is locally free as an $A$-module. Then $E_m$ is locally free in each degree $m\geq 0$. Thus, each degree $m$ component of the map \eqref{graded map} is, due to the hom-tensor adjunction, equivalent with the induced map
\begin{equation}\label{dual map} \xymatrix{ H^0(\calI(m)) \otimes_A E_m^{*} \ar[r] & A, }
\end{equation}
where $E_m^*=\operatorname{Hom}_A(E_m,A)$ is the dual module of $E_m$. The image of the map \eqref{dual map} is an ideal that corresponds to the closed subscheme on $\Spec(A)$ where \eqref{dual map} vanishes. Taking the intersection of these closed subschemes for all $m$ determines the closed criterion on $\Spec(A)$ where \eqref{graded map} vanishes. We then have that $\VPS{N}{Z}$ is closed in the Hilbert scheme.   
\end{proof}

\subsection{Differential operators}\label{Fperp} Let $F=F(z, x_1, \ldots, x_n)$ be a homogeneous, quadratic polynomial in $A[z, x_1, \ldots, x_n]$. We have the differential operator $\partial F$ on $A[z, x_1, \ldots, x_n]$ sending an element $f=f(z, x_1, \ldots, x_n)$ to 
$$ \partial{F}(f) =F\biggl(\frac{\partial  }{\partial z}, \frac{\partial }{\partial x_1}, \ldots, \frac{\partial }{\partial x_n}\biggr)(f).$$
If $f$ is of degree two, the degree of $F$, the element ${\partial F}(f)$ is a scalar.  Let $H^0(\calO(2))$ denote the $A$-module of two-forms. We obtain an $A$-linear map 
$$ {\partial{F}}\colon H^0(\calO(2)) \ra A,$$
by differentiating as dictated by $F$. The kernel of ${\partial{F}}$ is a submodule of $H^0(\calO(2))$, and generates an ideal $F^{\perp}$ in the polynomial ring $A[z, x_1, \ldots, x_n]$.

\begin{prop}\label{apolar} Let $F=F(z, x_1, \ldots, x_n)$ in $k[z, x_1, \ldots, x_n]$ be a quadratic polynomial defining a smooth hypersurface in $\Proj^n_k$, with $k$ a field of zero characteristic. Let ${Z(F)\subseteq \A^{n+1}_k}$ denote the closed subscheme defined by the ideal $F^{\perp}$ \eqref{Fperp}. We then have that $\VPS{n+1}{Z(F)}$, the space of $n+1$ points in $\Proj^n_k$ apolar to $Z(F)$,  is a closed subscheme of the Grassmannian of rank $n+1$ quotients of the $k$-vector space of two-forms $H^0(\calO(2))$.
\end{prop}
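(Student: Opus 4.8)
The plan is to unwind the apolarity condition degree by degree, reducing it to a non-degeneracy condition in degree one and a single linear (Schubert-type) condition in degree two, and then to combine the closedness statement of Proposition~\ref{VPS} with the explicit Grassmannian embedding of Proposition~\ref{prop:nets of quadrics} via a properness argument.

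First I would check that $Z(F)$ is finite over $k$, so that Proposition~\ref{VPS} applies with $I=F^{\perp}$. Since $F$ is a nondegenerate quadric and $\operatorname{char} k=0$, the pairing $\partial F\colon V_2\to k$ is surjective with $(F^{\perp})_1=0$, and a direct computation (the apolar algebra is Artinian Gorenstein with Hilbert function $(1,n+1,1)$) shows that $K:=(F^{\perp})_2=\ker(\partial F)$ is a hyperplane and that $V_1\cdot K=V_3$, so that $k[z,x_1,\ldots,x_n]/F^{\perp}$ is concentrated in degrees $0,1,2$ and has length $n+3$. In particular $Z(F)$ is finite and flat, and Proposition~\ref{VPS} gives that $\VPS{n+1}{Z(F)}$ is a closed subscheme of the \emph{projective} scheme $\operatorname{Hilb}^{n+1}_{\Proj^n_k}$; in particular it is proper over $k$.

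Next I would analyze the apolarity condition for a flat, finite, rank $n+1$ family $\Gamma\subseteq\Proj^n_A$, exactly as in the proof of Proposition~\ref{VPS}. The inclusion $\bigoplus_m H^0(\calI(m))\subseteq F^{\perp}$ is automatic in degree $0$, is equivalent in degree $1$ to $H^0(\calI(1))=0$ (that is, to $\Gamma$ being non-degenerate, since $(F^{\perp})_1=0$), is automatic in degrees $\geq 3$ (since $(F^{\perp})_m=V_m$ there), and in degree $2$ reads $H^0(\calI(2))\subseteq K$. Thus an apolar family is automatically non-degenerate, so by Proposition~\ref{prop:nets of quadrics} the functor $\VPS{n+1}{Z(F)}$ is a subfunctor of $\operatorname{Gen}$; and since $\operatorname{Gen}$ represents non-degenerate families, the universal degree-two piece $H^0(\calI(2))$ is the universal subbundle $J_2\subseteq V_{2,\mathbb{G}}$ of \eqref{univ G}. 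The remaining condition $J_2\subseteq K\otimes\calO_{\mathbb{G}}$ is the vanishing of the composite $J_2\hookrightarrow V_{2,\mathbb{G}}\xrightarrow{\partial F}\calO_{\mathbb{G}}$, which is a closed condition on all of $\mathbb{G}^{n+1}(V_2)$. Hence $\VPS{n+1}{Z(F)}=\operatorname{Gen}\cap W$, where $W\subseteq\mathbb{G}^{n+1}(V_2)$ is this closed locus.

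Finally I would upgrade ``closed in $\operatorname{Gen}$'' to ``closed in $\mathbb{G}^{n+1}(V_2)$''. The non-degenerate locus is open in $\operatorname{Hilb}^{n+1}_{\Proj^n_k}$ and is represented by $\operatorname{Gen}$, so since $\VPS{n+1}{Z(F)}$ is closed in the Hilbert scheme the inclusion $\VPS{n+1}{Z(F)}\hookrightarrow\operatorname{Gen}$ is a closed immersion; composing with the locally closed immersion $\operatorname{Gen}\hookrightarrow\mathbb{G}^{n+1}(V_2)$ exhibits $\VPS{n+1}{Z(F)}\to\mathbb{G}^{n+1}(V_2)$ as a locally closed immersion. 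But $\VPS{n+1}{Z(F)}$ is proper over $k$ by the first step, and a proper immersion into the separated scheme $\mathbb{G}^{n+1}(V_2)$ is a closed immersion, which gives the claim. I expect the main obstacle to be the first step, namely pinning down that $Z(F)$ is genuinely finite (equivalently that $K$ generates everything in degrees $\geq 3$); this is where nondegeneracy of $F$ and $\operatorname{char} k=0$ are essential. The closedness upgrade is then a formal consequence of properness of the Hilbert scheme.
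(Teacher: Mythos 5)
Your proposal follows the paper's route (Proposition~\ref{VPS} for closedness in the Hilbert scheme, containment in $\operatorname{Gen}$, then the Grassmannian embedding of Proposition~\ref{prop:nets of quadrics}), but there is one genuine gap, in your degree-one step. You claim the apolarity inclusion is ``equivalent in degree $1$ to $H^0(\calI(1))=0$, that is, to $\Gamma$ being non-degenerate.'' The first equivalence is fine; the second is false at the level of families, and it is exactly the load-bearing step for your inclusion $\VPS{n+1}{Z(F)}\subseteq\operatorname{Gen}$. Non-degeneracy of a family $\Gamma\subseteq\Proj^n_A$ is by definition a \emph{fiberwise} condition, whereas $H^0(\calI(1))=0$ only says that no relative hyperplane contains the whole family: since cohomology of the twisted ideal sheaf does not commute with base change, the kernel of $V_1\otimes_A k(s)\to H^0(\calO_{\Gamma_s}(1))$ can be strictly larger than the image of $H^0(\calI(1))\otimes_A k(s)$, so a flat family with $H^0(\calI(1))=0$ can still have a special fiber lying in a hyperplane. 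This is precisely why the paper's proof does \emph{not} stop at degree one: it argues that a degenerate fiber forces degree-two elements $lz, lx_1,\ldots,lx_n$ (the ``fake hyperplanes'') into the ideal, and that smoothness of $F$ --- non-degeneracy of the polar pairing $(l,l')\mapsto\partial F(ll')$ --- prevents all of these from lying in $K=\ker\partial F$. The telltale sign in your write-up is that smoothness of $F$ is used only to prove $Z(F)$ finite, whereas in the paper it also does the work of excluding degenerate fibers. Your argument is repairable without the fake-hyperplane computation: a \emph{field}-valued point of the closed subscheme $\VPS{n+1}{Z(F)}\subseteq\operatorname{Hilb}^{n+1}_{\Proj^n}$ is an apolar $\Gamma$ over a field, where $H^0(\calI(1))\subseteq(F^{\perp})_1=0$ genuinely gives non-degeneracy; since $\operatorname{Gen}$ is the open locus of the Hilbert scheme where fibers are non-degenerate (as you yourself use in your last step), and factoring through an open subscheme can be checked on points, this yields the scheme-theoretic inclusion into $\operatorname{Gen}$. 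But as written, the family-level equivalence you invoke is not available.

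The rest of your argument is sound and in places sharper than the paper. Your verification that $Z(F)$ is finite --- that $(F^{\perp})_1=0$, that $K$ is a hyperplane in $V_2$, and that $V_1\cdot K=V_3$ (a cubic all of whose first partials are multiples of $F$ must vanish, by non-degeneracy of $F$ and characteristic zero), giving the apolar algebra Hilbert function $(1,n+1,1)$ --- fills in a point the paper merely asserts, and you correctly flag it as the place where the hypotheses on $F$ enter the finiteness claim. Your identification of the degree-two condition as the vanishing locus $W$ of the composite $J_2\hookrightarrow V_{2,\mathbb{G}}\to\calO_{\mathbb{G}}$ induced by $\partial F$, so that $\VPS{n+1}{Z(F)}=\operatorname{Gen}\cap W$ with $W$ closed in all of $\mathbb{G}^{n+1}(V_2)$, is more explicit than anything the paper records. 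Finally, your closing step (closed in the proper Hilbert scheme, hence proper; a proper immersion into the separated Grassmannian is a closed immersion) is exactly the rigorous form of the paper's terse final sentence ``closed in the Hilbert scheme, but also closed in $\operatorname{Gen}$, hence closed in the Grassmannian.'' So once the degree-one step is patched as above, your proof goes through along essentially the paper's route.
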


\begin{proof} The scheme $Z(F)\subseteq \A_k^{n+1}$ defined by the ideal $F^{\perp}\subseteq k[z,x_1, \ldots, x_n]$ is finite, and flat since the base is a field. By definition the ideal $F^{\perp}$ is generated by homogeneous elements of degree two, and it follows by Proposition~\ref{VPS} that $\VPS{n+1}{Z(F)}$ is representable by a scheme. Let $\calI$ denote the ideal sheaf of a flat family $\Gamma \subseteq \Proj^n_A$ of relative rank $n+1$ over a $k$-algebra~$A$. Assume that the $A$-valued point of the Hilbert scheme $\operatorname{Hilb}^{n+1}_{\Proj^n}$ is an $A$-valued point of $\VPS{n+1}{Z(F)}$. Then we have an inclusion of ideals
$$\xymatrix{ \bigoplus_{m\geq 0}H^0(\calI(m))\subseteq F^{\perp} \quad \text{in} \quad A[z,x_1, \ldots, x_n]}.$$
In particular we have that $H^0(\calI(0))=H^0(\calI(1))=0$. Moreover, since the quadratic hyperplane in $\Proj^n$ determined by $F$ is non-singular, it follows that $H^0(\calI(2))$ contains no ``fake'' hyperplanes either. That is, the ideal does not contain degree 2 elements of the form  $lz, lx_1,\ldots ,  lx_n$, for some linear form $l$ in $k[z, x_1, \ldots, x_n]$.  That means that for any point in $\Spec(A)$ the fiber of the universal family is non-degenerate, hence the whole family over $\Spec(A)$ is a non-degenerate family. Thus, $\Spec(A)$ is an $A$-valued point of the scheme~$\operatorname{Gen}$ from Proposition~\ref{prop:nets of quadrics}. The scheme $\operatorname{Gen}$ is a locally closed subscheme of the Hilbert scheme $\mathrm{Hilb}^{n+1}_{\Proj^n}$ and Proposition~\ref{prop:nets of quadrics}  tells us that $\operatorname{Gen}$ is embedded in the Grassmannian $\mathbb{G}^{n+1}(V_2)$ of rank $n+1$ quotients of the $A$-module of two-forms. As $\VPS{n+1}{Z(F)}$ is closed in the Hilbert scheme, but also closed in the space $\operatorname{Gen}$, it follows that it is closed in the Grassmannian.
\end{proof}

\begin{rem} Proposition \ref{apolar} is stated in \cite[Corollary 2.2]{MR3084557}. Their result is, in parts, based on the computation of the graded Betti numbers of non-degenerate length $n+1$ subschemes in $\Proj^n_k$. Our proof is perhaps more transparent. In any case our result concerns an embedding of the scheme $\VPS{n+1}{Z(F)}$ with its possibly non-reduced structure.
\end{rem} 
\appendix
\section{Strongly generated quotient sheaves}

We restate several of our previous results in a more general setting with modules and Quot schemes replacing ideals and Hilbert schemes. Standard basic opens of the Quot scheme were introduced in  \cite{MR2346502}. 

\subsection{Monomial bases} Let $S=A[t_1,\ldots,t_n]$ be a polynomial ring over a commutative unitary ring $A$ and let $F=\bigoplus_{i=1}^m Se_i$ be a free $S$-module. By a monomial in $F$, we mean an element of the form ${\m}e_i$, with ${\m}$ a monomial in the polynomial ring $S$. Let $\beta$ be a finite  collection of monomials in $F$.  We assume that $\beta$ has the property that it contains all its divisors: for any ${\m}e_i\in \beta$, it holds that
\begin{equation}\label{monomial crit2} {\m}e_i=t_i^d{\m}'e_i \Rightarrow t_i^{d-1}{\m}'e_i \in \beta,
\end{equation}
for any $i=1, \ldots, n$ and any $d\geq 1$.
\begin{lemma} Let $R\subseteq F=\bigoplus_{i=1}^m k[t_1, \ldots, t_n]$ be a submodule where $k$ is a field, such that the quotient $F/R$ is a finite dimensional $k$-vector space. Then there exists a  sequence $\beta$ of monomials satisfying \eqref{monomial crit2} such that the images of the elements of $\beta$ form a $k$-vector basis for the quotient module.
\end{lemma}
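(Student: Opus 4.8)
The plan is to follow the same strategy as the proof of Lemma \ref{lem:hasbasis}, now exploiting that the quotient $F/R$ is a module over the polynomial ring $k[t_1, \ldots, t_n]$; concretely, I would take $\beta$ to be the set of standard monomials with respect to a term order on the free module $F$. First I would fix a monomial order on the monomials $\mu e_i$ of $F$, that is, a well-ordering compatible with the module action in the sense that $\mu e_i < \nu e_j$ implies $\tau \mu e_i < \tau \nu e_j$ for every monomial $\tau$, and for which every proper divisor of a monomial is strictly smaller. Such orders exist; for instance one refines a degree order by any term order on $k[t_1, \ldots, t_n]$ together with the natural order on the indices $i$.

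With the order fixed I would let $\mathrm{in}(R) \subseteq F$ denote the initial submodule, generated by the leading monomials of all elements of $R$, and define $\beta$ to be the collection of monomials $\mu e_i$ not lying in $\mathrm{in}(R)$. The heart of the argument is the standard-monomial theorem in this module setting: the images of $\beta$ form a $k$-basis of $F/R$. Linear independence holds because a nontrivial $k$-linear dependence among the images would produce an element of $R$ whose leading monomial is a standard monomial, contradicting the definition of $\mathrm{in}(R)$; the spanning property follows by the usual reduction, repeatedly cancelling the leading monomial of a representative modulo $R$ until only standard monomials survive, a process that terminates because the order is a well-order. Finiteness of $\beta$ is then immediate from the hypothesis that $F/R$ is a finite-dimensional $k$-vector space.

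It remains to check that $\beta$ satisfies the order-ideal condition \eqref{monomial crit2}, and this is exactly where choosing $\mathrm{in}(R)$ to be a monomial submodule does the work. Since $\mathrm{in}(R)$ is generated by monomials it is closed under multiplication by monomials, so if $\mu'' e_i \in \mathrm{in}(R)$ then $t_j \mu'' e_i \in \mathrm{in}(R)$ for every $j$; contrapositively, whenever $\mu e_i = t_j \mu'' e_i$ lies in $\beta$, so does its divisor $\mu'' e_i$, which is precisely the requirement that $\beta$ contain all divisors of its elements. I do not anticipate a genuine obstacle: the entire content is the transport of standard Gr\"obner-basis theory from ideals to submodules of a finite free module, and the only points needing care are the existence of a compatible well-ordering on the monomials of $F$ and the observation that the complement of a monomial submodule is automatically downward closed under division.
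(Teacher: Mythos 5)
Your proof is correct and is in essence the argument the paper has in mind: the paper's own proof is just the one-line remark that the lemma is ``a consequence of $F/R$ being a $k[t_1,\ldots,t_n]$-module,'' and your initial-submodule/standard-monomial argument is the standard way to substantiate exactly that. All steps check out --- in particular, the key point that the complement of the monomial submodule $\operatorname{in}(R)$ is closed under division, which yields \eqref{monomial crit2}, and the fact that any well-ordering compatible with the module action automatically makes proper divisors strictly smaller.
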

\begin{proof} This is a consequence of $F/R$ being a $k[t_1,\ldots,t_n]$-module.
\end{proof}


\begin{prop}\label{gen2}  Let $S=A[t_1,\ldots,t_n]$ be a polynomial ring over a commutative unitary ring $A$ and let $F=\bigoplus_{i=1}^m Se_i$ be a free $S$-module. Let $R\subseteq F$ be a submodule such that $F/R$ is free of finite rank as an $A$-module, and let $\beta$ be a sequence of monomials in $F$, satisfying \eqref{monomial crit2}, that form an $A$-basis for $F/R$. Then we have that $R$ is generated in degree $d(\beta)+1$, where $d(\beta)$ is the top degree of the monomials. 
\end{prop}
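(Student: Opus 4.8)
The plan is to follow the strategy of the proof of Proposition~\ref{gen}, namely to exhibit an explicit finite set of generators of $R$ of degree at most $d(\beta)+1$ and then to check that these generators really do cut out all of $R$. Since $\beta$ is an $A$-basis of $F/R$, the $S$-module structure of $F/R$ is completely encoded by the action of the variables on that basis: for each $\mu\in\beta$ and each $i=1,\dots,n$ we may write $t_i\mu\equiv\sum_{\lambda\in\beta}a^{\mu}_{i,\lambda}\lambda\pmod R$ with scalars $a^{\mu}_{i,\lambda}\in A$, and likewise, since each free generator $e_j$ of $F$ maps into $F/R$, we may write $e_j\equiv\sum_{\lambda\in\beta}c_{j,\lambda}\lambda\pmod R$. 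I would set
\[
\rho_{i,\mu}=t_i\mu-\sum_{\lambda\in\beta}a^{\mu}_{i,\lambda}\lambda\qquad\text{and}\qquad \sigma_j=e_j-\sum_{\lambda\in\beta}c_{j,\lambda}\lambda .
\]
Because $\deg(t_i\mu)\le d(\beta)+1$ and $\deg(e_j)=0$, while every $\lambda\in\beta$ has degree at most $d(\beta)$, all the $\rho_{i,\mu}$ and $\sigma_j$ lie in $R$ and have degree at most $d(\beta)+1$. Let $R'\subseteq R$ be the submodule they generate; the task is to prove $R'=R$.

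The key step is to show that the classes of the elements of $\beta$ already generate $F/R'$ as an $A$-module, and I would establish this by induction on degree: every monomial $w=m\,e_j$ of $F$ is congruent modulo $R'$ to an $A$-linear combination of elements of $\beta$. In degree zero this is precisely the relation $\sigma_j$ (and is trivial when $e_j\in\beta$). For the inductive step one factors $m=t_im'$ with $\deg(m')=\deg(m)-1$, applies the induction hypothesis to the lower-degree monomial $m'e_j$ to write it modulo $R'$ as a combination of the $\lambda\in\beta$, multiplies through by $t_i$ (which preserves congruence modulo the submodule $R'$), and finally rewrites each resulting $t_i\lambda$ using the relation $\rho_{i,\lambda}$. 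This reduces $w$ to an $A$-combination of $\beta$ modulo $R'$, completing the induction; since the monomials $m\,e_j$ form an $A$-basis of $F$, the classes of $\beta$ span $F/R'$.

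Finally I would upgrade the tautological surjection $\pi\colon F/R'\twoheadrightarrow F/R$ to an isomorphism, using the hypothesis that $F/R$ is a \emph{free} $A$-module with basis the image of $\beta$. Freeness of the target lets one define an $A$-linear section $s\colon F/R\to F/R'$ by $\bar\lambda\mapsto[\lambda]$; then $\pi s=\id$ on the basis $\{\bar\lambda\}$, hence on all of $F/R$, while $s\pi$ fixes the generating set $\{[\lambda]\}$ of $F/R'$ produced in the previous paragraph, hence equals $\id$ there too. Thus $\pi$ is an isomorphism and $R'=R$, so $R$ is generated by the $\rho_{i,\mu}$ and $\sigma_j$, all of degree at most $d(\beta)+1$, which is the asserted bound; exactly as in Proposition~\ref{gen}, passing to the homogenization with respect to the extra variable then gives generation in degree $d(\beta)+1$ should the graded formulation be the one intended. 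The one genuinely new point compared with the ideal case of Proposition~\ref{gen} is the presence of free generators $e_j\notin\beta$, which is precisely what the reduction relations $\sigma_j$ are there to absorb; the main work — and the step I expect to require the most care — is the degree induction together with the freeness argument that converts surjectivity into an isomorphism. (As in Proposition~\ref{gen}, the order ideal condition \eqref{monomial crit2} is not actually used in this argument; only the basis property of $\beta$ enters.)
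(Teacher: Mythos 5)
Your proposal is correct and follows essentially the same route as the paper: the paper's proof of Proposition~\ref{gen2} consists of the single remark that it is ``similar to the proof of Proposition~\ref{gen}'', i.e., the multiplication relations $t_i\mu-\sum_{\lambda\in\beta}a_{i,\lambda}\lambda$ (augmented, in the module case, by the reduction relations $\sigma_j$ for the free generators $e_j$, exactly as you introduce them) generate $R$ in degree at most $d(\beta)+1$, followed by homogenization. You have simply supplied the details the paper leaves implicit --- the induction on degree showing $\beta$ spans $F/R'$ and the section argument using freeness of $F/R$ to conclude $R'=R$ --- and your observation that the order ideal condition \eqref{monomial crit2} is not needed here is also consistent with the paper, whose Proposition~\ref{gen} does not assume it.
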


\begin{proof} The proof is similar to the proof of Proposition~\ref{gen}. 
\end{proof}

\subsection{Grassmannian} 
Let $A[z, x_1, \ldots, x_n]=\bigoplus_{d\ge0}V_d$ denote the standard graded polynomial ring, and let $V_{d}^m$ denote $m$ copies of the degree $d$-forms, that is $V_d^m=\bigoplus_{i=1}^mV_d$. On the Grassmannian $\mathbb{G}^N(V_{d}^m)$ of locally free rank~$N$ quotients of $V_{d}^m$ we have the universal sequence 
$$ \xymatrix{ 0\ar[r]& J_d \ar[r] & V^m_{d,\mathbb{G}} \ar[r]& E_d \ar [r] & 0.}$$
Consider the sheaf $\mathcal{S}=\calO_{\mathbb{G}}[z,x_1,\ldots,x_n]$ on the Grassmannian. The kernel~$J_d$ generates a graded submodule $J=J_d\cdot \mathcal{S}$ of the free $\mathcal{S}$-module  $\mathcal{F}=\bigoplus_{i=1}^m\mathcal{S}e_i$, and we let $\Q{J}$ denote the graded quotient module.

\begin{thm}\label{beta scheme2} Let $S=A[t_1, \ldots, t_n]$ denote the polynomial ring in the variables $t_1, \ldots, t_n$ over a ring $A$ and let $F=\bigoplus_{i=1}^m Se_i$. Let $\beta$ be a sequence of monomials satisfying \eqref{monomial crit2}, and fix an integer $d\geq d(\beta)+1$.  The functor parameterizing $S$-module quotients $F/R$ such that $\beta$ is an $A$-module  basis for the quotient, is represented by the scheme 
$$\xymatrix{ \Gr{\xbeta{d}}\cap \Fitt_{N-1}(\Q{J}_{d+1}),}$$
where $N=|\beta|$. Here $\Gr{\xbeta{d}}$ is the basic open in the Grassmannian $\mathbb{G}^N(V_{d}^m)$. 
\end{thm}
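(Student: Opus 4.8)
The plan is to mirror the proof of Theorem~\ref{beta scheme} in the module setting, replacing the Hilbert functor by the Quot functor and using the module-theoretic analogues of the auxiliary results already established. First I would unwind the meaning of the two conditions defining the candidate scheme. On the basic open $\Gr{\xbeta{d}}$ of the Grassmannian $\mathbb{G}^N(V_d^m)$, the images of $\xbeta{d}$ form, by the defining property of the basic open, an $\calO_{\Gr{\xbeta{d}}}$-basis for the universal quotient bundle $E_d$. Thus on $\Gr{\xbeta{d}}$ the sequence $\xbeta{d}$ is automatically a basis in degree $d$, which is precisely the hypothesis needed to invoke the graded-quotient results in degree $d+1$ and beyond.

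Next I would restrict to the closed stratum $\Fitt_{N-1}(\Q{J}_{d+1})$ and argue that the graded quotient $\Q{J}$ becomes locally free of rank $N$ in every degree $m\geq d$. Here the key input is the module analogue of Proposition~\ref{Gustav}: over the intersection, the vanishing appropriate to the $(N-1)$'th Fitting ideal forces $\Q{J}_{d+1}$ to be locally free of rank $N$, and the persistence statement propagates this to all higher degrees, with $\xbeta{d+m}$ generating $\Q{J}_{d+m}$. Combined with Proposition~\ref{gen2}, which guarantees that a submodule with a basis of the form $\beta$ is generated in degree $d(\beta)+1\leq d$, this shows that over the intersection scheme the submodule $J=(J_d)$ determines a quotient $F/R$ that is free of rank $N$ as an $A$-module, with $\beta$ a basis, exactly as required by the functor. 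The universal family is then read off by localizing in $z$ and taking the degree-zero part, exactly as in the affine-hyperplane-complement passage used in Theorem~\ref{beta scheme}.

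Finally I would verify the universal property in both directions: any map from a base $T$ to the intersection scheme yields such a quotient by the above, and conversely any $T$-point of the functor --- a quotient $F/R$ with $\beta$ an $A$-basis --- produces, by taking the degree-$d$ part, a point of $\Gr{\xbeta{d}}$ at which the Fitting condition in degree $d+1$ holds, because Proposition~\ref{gen2} bounds the degrees of generators of $R$ and the basis hypothesis makes the relevant quotient locally free. Matching these two assignments and checking naturality in $T$ establishes the representability.

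The main obstacle I anticipate is the careful bookkeeping in the module setting rather than any genuinely new idea: one must check that the homogenization and the passage between $F=\bigoplus Se_i$ and its graded lift $\mathcal{F}=\bigoplus \mathcal{S}e_i$ behave well under the localization in $z$, and that Proposition~\ref{Gustav} indeed transfers verbatim to graded quotients of free modules of several generators (the $m>1$ case). Since Fitting ideals commute with base change, I would reduce to $A$ local to clear up the local-freeness arguments, but the indexing over the summands $e_i$ and the interaction with condition~\eqref{monomial crit2} is where the argument needs the most attention to detail. Because the statement is explicitly advertised as being \emph{quite similar} to the affine case, I expect the proof to consist essentially of citing the module analogues and remarking that the arguments of Theorem~\ref{beta scheme} carry over, so the real content is confirming those analogues hold rather than reproving them.
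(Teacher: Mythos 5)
Your proposal is correct and takes essentially the same route as the paper: the published proof simply states that the argument of Theorem~\ref{beta scheme} carries over, with the original statement \cite[Proposition 2.7]{GFitting}, which is valid for modules, replacing Proposition~\ref{Gustav}. In particular, the one point you flagged as needing verification --- whether Proposition~\ref{Gustav} transfers to graded quotients of free modules with $m>1$ generators --- is exactly what the paper resolves by citing that original module-level version, so your plan is complete.
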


\begin{proof} The proof is similar to the proof of Theorem \ref{beta scheme}. Instead of using Proposition~\ref{Gustav}, one uses the original statement \cite[Proposition 2.7]{GFitting} that is valid for modules.
\end{proof}

\subsection{Quot scheme} Let $k$ be a field, and $S=k[t_1, \ldots, t_n]$. We say that an $S$-submodule $J\subseteq F=\bigoplus_{i=1}^mSe_i$ is $d$-strongly generated if there is a sequence of monomials $\beta$ in $F$ satisfying \eqref{monomial crit2} that form a vector space basis for the quotient, and where $d\geq d(\beta)+1$.

Fix $S=A[t_1, \ldots, t_n]$ and an $S$-module $F$. The Quot functor $\operatorname{Quot}^N_{F/S/A}$ of Grothendieck parametrizes $S$-module quotients of $F$ that are flat, finite of relative rank $N$ over the base $A$ \cite{MR1611822}. We consider the subfunctor parametrizing $S$-module quotients of $F$ that are flat, finite of relative rank $N$ and fiberwise $d$-strongly generated.
\subsection{Cover III} Let $d$ and $N$ be fixed integers, and let $S=A[t_1, \ldots, t_n]$. Let $\calB$ denote the collection of all sequences of monomials $\beta$ in $F=\bigoplus_{i=1}^mSe_i$ satisfying \eqref{monomial crit2}, where $|\beta|=N$ and where $d\geq d(\beta)+1$. We define the scheme
\begin{equation}\label{quot}
\operatorname{Gen}^m(d,N) =\cup_{\beta \in \calB}  \Gr{\xbeta{d}}\cap \Fitt_{N-1}(\Q{J}_{d+1}),
\end{equation}
where we use the notation from Theorem \ref{beta scheme2}.

\begin{cor} The functor that parametrizes $S=A[t_1, \ldots, t_n]$-module quotients of $F=\bigoplus_{i=1}^m Se_i$ that are $d$-strongly generated of relative rank $N$, is represented by the scheme $\operatorname{Gen}^m(d,N)$ \eqref{quot}. Moreover, if $d\geq N$, then we have the equality
\[ \operatorname{Gen}^m(d,N)=\operatorname{Quot}^N_{F/S/A}.\]
\end{cor}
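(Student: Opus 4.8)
The plan is to follow the blueprint already established for the Hilbert scheme case in Proposition~\ref{prop:stronglygenerated}, transporting each step to the module setting via the generalized results in the Appendix. The first statement is a representability claim, and the essential point is that the candidate scheme $\operatorname{Gen}^m(d,N)$ is genuinely a \emph{union} that represents the $d$-strongly generated subfunctor. By Theorem~\ref{beta scheme2}, each piece $\Gr{\xbeta{d}}\cap \Fitt_{N-1}(\Q{J}_{d+1})$ represents the functor of quotients $F/R$ for which the \emph{specific} sequence $\beta$ is an $A$-module basis. So I would first argue that these open pieces glue to represent the subfunctor where \emph{some} admissible $\beta$ serves as a basis, which is exactly the $d$-strongly generated condition.

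The gluing step is where I would spend the real care. Given two sequences $\beta,\gamma\in\calB$, I would examine the overlap $\bigl(\Gr{\xbeta{d}}\cap\Fitt_{N-1}\bigr)\cap\bigl(\operatorname{Hilb}^{\gamma}\text{-analogue}\bigr)$ inside the common ambient Grassmannian $\mathbb{G}^N(V_d^m)$, and invoke the universal property from Theorem~\ref{beta scheme2} to identify this intersection as the locus where \emph{both} $\beta$ and $\gamma$ simultaneously form a basis for the universal quotient. Since both descriptions agree on overlaps as subfunctors, the universal families patch, and the union \eqref{quot} is a well-defined locally closed subscheme of the Grassmannian representing the $d$-strongly generated subfunctor. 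This mirrors the first paragraph of the proof of Proposition~\ref{prop:stronglygenerated} verbatim, with the module-theoretic replacements.

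For the equality $\operatorname{Gen}^m(d,N)=\operatorname{Quot}^N_{F/S/A}$ under the hypothesis $d\geq N$, the argument reduces, as before, to checking that every field-valued point of the Quot scheme lies in some chart. Let $k$ be a field and let $F/R$ be any length-$N$ quotient of $\bigoplus_{i=1}^m k[t_1,\ldots,t_n]e_i$. By the module version of Lemma~\ref{lem:hasbasis} (the second Lemma in the Appendix), there exists a sequence $\beta$ of monomials in $F$ satisfying \eqref{monomial crit2} whose images form a $k$-basis for $F/R$. Because the quotient has length $N$, every monomial appearing in such a $\beta$ has degree strictly less than $N$, so $d(\beta)\leq N-1$, whence $d(\beta)+1\leq N\leq d$ and $\beta\in\calB$. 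Thus the point lies in the chart $\Gr{\xbeta{d}}\cap\Fitt_{N-1}(\Q{J}_{d+1})$, and since field-valued points determine the underlying set, the charts cover all of $\operatorname{Quot}^N_{F/S/A}$.

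The main obstacle I anticipate is the degree bound $d(\beta)\leq N-1$ that makes $d\geq N$ the right threshold: one must confirm that an order-ideal monomial basis of an $N$-dimensional quotient cannot contain a monomial of degree $\geq N$. This follows because condition \eqref{monomial crit2} forces the presence of an entire chain of proper divisors beneath any monomial $\m e_i$, and a monomial of degree $\delta$ in the $i$-th summand drags along at least $\delta+1$ distinct monomials in that summand; should $\delta\geq N$, the cardinality would already exceed $N$. I would state this divisor-chain counting argument explicitly, as it is the one genuinely new verification in the module setting and the precise reason the Appendix-level generalization goes through. Everything else is a transcription of the affine-space proof, and I would compress those parts by referring back to Proposition~\ref{prop:stronglygenerated}.
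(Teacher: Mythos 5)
Your proposal is correct and follows essentially the same route as the paper, whose proof simply transports the argument of Proposition~\ref{prop:stronglygenerated} (overlaps of charts representing the ``both $\beta$ and $\gamma$ are bases'' subfunctor, plus the covering of field-valued points via the Appendix's monomial-basis lemma) to the module setting via Theorem~\ref{beta scheme2}. Your explicit divisor-chain count giving $d(\beta)\leq N-1$ is a correct filling-in of a step the paper leaves implicit, not a different argument.
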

\begin{proof} The proof is similar to the proof of Proposition~\ref{prop:stronglygenerated}.
\end{proof}
\bibliography{references}{}

\providecommand{\bysame}{\leavevmode\hbox to3em{\hrulefill}\thinspace}
\providecommand{\MR}{\relax\ifhmode\unskip\space\fi MR }
\providecommand{\MRhref}[2]{%
  \href{http://www.ams.org/mathscinet-getitem?mr=#1}{#2}
}
\providecommand{\href}[2]{#2}
\begin{thebibliography}{GLS07b}

\bibitem[ABM10]{ABM}
Mariemi Alonso, Jerome Brachat, and Bernard Mourrain, \emph{The hilbert scheme
  of points and its link with border basis}, ArXiv e-prints (2010), pp.\,39.

\bibitem[BBR15]{BBRoggero}
Edoardo Ballico, Cristina Bertone, and Margherita Roggero, \emph{The locus of
  points of the {H}ilbert scheme with bounded regularity}, Comm. Algebra
  \textbf{43} (2015), no.~7, 2912--2931. \MR{3354070}

\bibitem[Eis95]{eisen-comalg}
David Eisenbud, \emph{Commutative algebra: with a view toward algebraic
  geometry}, Graduate Texts in Mathematics, vol. 150, Springer-Verlag, New
  York, 1995. \MR{1322960 (97a:13001)}

\bibitem[GLS07a]{MR2324602}
T.~S. Gustavsen, D.~Laksov, and R.~M. Skjelnes, \emph{An elementary, explicit,
  proof of the existence of {H}ilbert schemes of points}, J. Pure Appl. Algebra
  \textbf{210} (2007), no.~3, 705--720. \MR{2324602}

\bibitem[GLS07b]{MR2346502}
Trond~St{\o}len Gustavsen, Dan Laksov, and Roy~Mikael Skjelnes, \emph{An
  elementary, explicit, proof of the existence of {Q}uot schemes of points},
  Pacific J. Math. \textbf{231} (2007), no.~2, 401--415. \MR{2346502}

\bibitem[Got78]{Gotzmann}
Gerd Gotzmann, \emph{Eine {B}edingung f\"ur die {F}lachheit und das
  {H}ilbertpolynom eines graduierten {R}inges}, Math. Z. \textbf{158} (1978),
  no.~1, 61--70. \MR{0480478 (58 \#641)}

\bibitem[Gro61]{MR1611822}
Alexander Grothendieck, \emph{{T}echniques de construction et th\'eor\`emes
  d'existence en g\'eom\'etrie alg\'ebrique. {IV}. {L}es sch\'emas de
  {H}ilbert}, S\'eminaire {B}ourbaki, Vol.\ 6, no.\ 221, Soc.\ Math.\ France,
  1961, pp.~249--276.

\bibitem[GS]{M2}
Daniel~R. Grayson and Michael~E. Stillman, \emph{Macaulay2, a software system
  for research in algebraic geometry}, Available at
  \url{http://www.math.uiuc.edu/Macaulay2/}.

\bibitem[GW10]{MR2675155}
Ulrich G{\"o}rtz and Torsten Wedhorn, \emph{Algebraic geometry {I}}, Advanced
  Lectures in Mathematics, Vieweg + Teubner, Wiesbaden, 2010, Schemes with
  examples and exercises. \MR{2675155}

\bibitem[Hai98]{MR1661369}
Mark Haiman, \emph{{$t,q$}-{C}atalan numbers and the {H}ilbert scheme},
  Discrete Math. \textbf{193} (1998), no.~1-3, 201--224, Selected papers in
  honor of Adriano Garsia (Taormina, 1994). \MR{1661369}

\bibitem[Hui06]{MR2221028}
Mark~E. Huibregtse, \emph{An elementary construction of the multigraded
  {H}ilbert scheme of points}, Pacific J. Math. \textbf{223} (2006), no.~2,
  269--315. \MR{2221028}

\bibitem[IK99]{IK}
Anthony Iarrobino and Vassil Kanev, \emph{Power sums, {G}orenstein algebras,
  and determinantal loci}, Lecture Notes in Mathematics, vol. 1721,
  Springer-Verlag, Berlin, 1999, Appendix C by Iarrobino and Steven L. Kleiman.
  \MR{1735271}

\bibitem[KK05]{MR2116166}
Achim Kehrein and Martin Kreuzer, \emph{Characterizations of border bases}, J.
  Pure Appl. Algebra \textbf{196} (2005), no.~2-3, 251--270. \MR{2116166}

\bibitem[KKR05]{MR2161988}
Achim Kehrein, Martin Kreuzer, and Lorenzo Robbiano, \emph{An algebraist's view
  on border bases}, Solving polynomial equations, Algorithms Comput. Math.,
  vol.~14, Springer, Berlin, 2005, pp.~169--202. \MR{2161988}

\bibitem[KR08]{MR2452308}
Martin Kreuzer and Lorenzo Robbiano, \emph{Deformations of border bases},
  Collect. Math. \textbf{59} (2008), no.~3, 275--297. \MR{2452308}

\bibitem[Led11]{MR2826478}
Mathias Lederer, \emph{Gr\"obner strata in the {H}ilbert scheme of points}, J.
  Commut. Algebra \textbf{3} (2011), no.~3, 349--404. \MR{2826478}

\bibitem[Nor76]{Northcott}
D.~G. Northcott, \emph{Finite free resolutions}, Cambridge University Press,
  Cambridge, 1976, Cambridge Tracts in Mathematics, No. 71. \MR{0460383 (57
  \#377)}

\bibitem[RS00]{RanestadSchreyer}
Kristian Ranestad and Frank-Olaf Schreyer, \emph{Varieties of sums of powers},
  J. Reine Angew. Math. \textbf{525} (2000), 147--181. \MR{1780430}

\bibitem[RS13]{MR3084557}
\bysame, \emph{The variety of polar simplices}, Doc. Math. \textbf{18} (2013),
  469--505. \MR{3084557}

\bibitem[Skj14]{QuotinGrass}
Roy~Mikael Skjelnes, \emph{Quot schemes in grassmannians}, ArXiv e-prints
  (2014), pp.\,20.

\bibitem[St{\aa}14]{GFitting}
Gustav~S{\ae}d{\'e}n St{\aa}hl, \emph{Gotzmann's persistence theorem for finite
  modules}, ArXiv e-prints (2014), pp.\,13.

\end{thebibliography}
\bibliographystyle{amsalpha}
 
\end{document}